\documentclass{article}
\usepackage{amsfonts}
\usepackage{amssymb}
\usepackage{graphicx}
\usepackage{xcolor}
\usepackage{amsmath}
\usepackage[colorlinks=true,linkcolor=blue,citecolor=red,urlcolor=black]{hyperref}

\newtheorem{theorem}{Theorem}

\newtheorem{remark}[theorem]{Remark}

\newenvironment{proof}[1][Proof]{\noindent\textbf{#1.} }{\ \rule{0.5em}{0.5em}}

\title{Couette Taylor instabilities for counter-rotating cylinders in the small-gap regime}

\author{D. Bian\footnote{School of Mathematics and Statistics, Beijing Institute of Technology, Beijing 100081, China. Email: biandongfen@bit.edu.cn}, 
E. Grenier\footnote{Academy of Mathematics and System Science, Chinese Academy of Science, Beijing, China. Email:
emmanuelgrenier@amss.ac.cn},
G. Iooss\footnote{Laboratoire J.A.Dieudonné, I.U.F., Université Côte d’Azur, Parc Valrose, 06108 Nice Cedex 02, France. Email: iooss.gerard@orange.fr},
Z. Yang\footnote{Department of Mathematics, The Ohio State University, Columbus, OH 43210, USA. Email: yang.8242@osu.edu}
}
\date{\today}

\begin{document}

\maketitle


\subsubsection*{Abstract}

We study the Couette–Taylor instabilities for an incompressible viscous fluid between two coaxial cylinders of nearly equal radii, allowing counter-rotation with the ratio of rotation rate $\mu \in [-1,1]$. Working in a rotating frame and in a small-gap and small-viscosity regime, we derive the corresponding limiting Navier–Stokes system and analyze the linear stability of the Couette flow. In particular, we numerically compute the critical Taylor number for general perturbations and identify a transition near $\mu_c \approx -0.8$: for $\mu > \mu_c$ the most unstable mode is axisymmetric, whereas for $\mu < \mu_c$ the most unstable mode is non-axisymmetric.

Near criticality, slowly varying traveling waves are governed by a time-independent Ginzburg–Landau equation. The nonlinear coefficient changes sign near $\hat{\mu}_c \approx -0.65$, yielding a supercritical regime for $\mu > \hat{\mu}_c$ and a subcritical regime for $\mu_c < \mu < \hat{\mu}_c$. In the subcritical range, we classify small-amplitude steady states, including Taylor vortex flows, wavy vortices, a two-parameter family of quasi-periodic flows, and a localized traveling perturbation of the Couette flow.



\section{Introduction}


In this article, we study the  flow of a viscous fluid between two rotating cylinders of nearly equal radii.
More precisely, we consider solutions $u(t,x,y,z)$ of the Navier-Stokes equations
\begin{equation} \label{NS1}
\partial_t u + (u \cdot \nabla) u - \nu \Delta u + \nabla p = 0,
\end{equation}
\begin{equation} \label{NS2}
\nabla \cdot u = 0
\end{equation}
in the domain $\Omega$, which is defined in cylindrical coordinates $(r,\varphi,z)$ by
$$
\Omega = \Bigl\{ (r,\varphi,z) \quad  | \quad 
r_i < r < r_o \Bigr\}.
$$
In these equations, $u(t,x,y,z)$ denotes the velocity of the fluid, $p(t,x,y,z)$ the pressure, and $\nu > 0$ the viscosity. 
The domain $\Omega$ is the region between two coaxial cylinders with
the inner radius $r_i$ and the outer radius $r_o$.

The inner cylinder and the outer cylinder rotate at angular velocities $\omega_i$ and  $\omega_o$, respectively. We assume that the fluid ``sticks" to the boundary, 
namely that the velocity of the fluid equals the velocity of the cylinders on $r = r_i$ and $r = r_o$.

We define the gap $d$, the radius ratio $\eta$, and the ratio of rotation rates $\mu$ by
$$
d = r_o - r_i, \quad \eta = \frac{r_i}{r_o}, \quad \mu = \frac{\omega_o}{\omega_i}.
$$
In this article, we study the case where 
$$
\mu_c \le \mu < 1,
$$
where $\mu_c\approx -0.8$ is defined below, and the case $\mu=1$ was treated in \cite{BGGIY}.
The case $\mu <0$ corresponds
to counter-rotating cylinders.
We work in a rotating frame, with angular velocity $\Omega_{rf}$ defined by
$$
\Omega_{rf} = \frac{\omega_i + \omega_o}{2} =  \omega_i \frac{1+\mu}{2},
$$
and study the stability and bifurcations of the Couette flow in this rotating frame, given by
\begin{equation} \label{Couette}
U(r) = r \, \Omega_{rot} (r) := (A - \Omega_{rf})r + \frac{B}{r},
\end{equation}
where $A$ and $B$ are constants such that the fluid velocity matches the cylinder velocities at the inner and outer boundaries:
$$
A = \omega_i \frac{\mu - \eta^2}{1 - \eta^2},
\quad 
B = \omega_i r_i^2 \frac{1 - \mu}{1 - \eta^2}.
$$
We define the rescaled rotation rate $\widehat{\omega}$, the Reynolds number $\mathfrak{R}$, and the Taylor number $T$ by
\begin{eqnarray*}
\widehat{\omega } &=&\omega _{i}\frac{d^{2}}{\nu } =\frac{\omega _{i}r_{o}^{2}%
}{\nu }(1-\eta )^{2}, \\
\mathfrak{R} &=&\frac{\widehat{\omega }(1-\mu )}{1-\eta }=\frac{\omega
_{i}r_{o}^{2}(1-\mu )}{\nu }(1-\eta ), \\
T &=&2\widehat{\omega }\mathfrak{R=}\frac{2\omega _{i}^{2}r_{o}^{4}(1-\mu )}{%
\nu ^{2}}(1-\eta )^{3}.
\end{eqnarray*}%
Assume that
\[
1-\eta =O \Bigl(\frac{\nu }{\omega _{i}r_{o}^{2}} \Bigr)^{2/3}
\]%
then, as $\nu /(\omega _{i}r_{o}^{2}) \rightarrow 0,$ we have%
\begin{equation} \label{scaling}
(1-\eta)\rightarrow 0,\qquad \widehat{\omega }\rightarrow 0, 
\qquad \mathfrak{R} = O \Bigl((1-\eta)^{-1/2} \Bigr)\rightarrow \infty, \qquad  T=O(1).
\end{equation}
This paper is devoted to the study of the stability and bifurcations of the Couette flow (\ref{Couette}) for the
rotating Navier-Stokes equations under the scaling (\ref{scaling}).
Note that the case $\mu = 1$ was studied in \cite{BGGIY}, and the small gap situation with $\mathfrak{R} = O \Bigl((1-\eta)^{-1/2} \Bigr)$ was also studied by Nagata in \cite{Nagata24}.

\medskip

In section \ref{smallgap}, we study  the formal limit of the rotating Navier Stokes equations under the scaling \eqref{scaling}. We start with the cylindrical coordinates $(r,\varphi,z)$ and define the rescaled variables
$\hat x$, $\hat y$, and $\hat z$ by
\begin{equation} \label{rescaling}
\hat{x}=\frac{2r-r_o-r_i}{2d},\qquad \hat{y}=\frac{r_o+r_i}{2d}\varphi,\qquad \hat{z}=\frac{z}{d}
\end{equation}
and then suppress the hats. The period in $y$ is 
$$ \pi \frac{r_o + r_i}{r_o - r_i} = \pi \frac{1+\eta}{1-\eta},$$
which goes to infinity as $\eta$ goes to $1$.
Thus, in the limit \eqref{scaling}, functions of $y \in \mathbb{R}$ are no longer periodic. 

Let us consider the Fourier transform in $z$, with dual variable $\alpha$, 
and the Fourier transform in $y$, with dual variable $\beta$. We focus on solutions that vary very slowly in $y$, on length scales of order $\mathfrak{R}$.
In other words, we are interested in regimes where 
$$
\mathfrak{B} = \beta \mathfrak{R}
$$
remains of order $1$. For such solutions, the operator $\mathfrak{R} \partial_y$ remains bounded in the limiting process.

As $\eta$ goes to $1$ under the scaling (\ref{scaling}), the limiting system is
\begin{eqnarray}
(\partial _{t}-\Delta _{\bot })u_{\bot }+\nabla _{\bot }p &=&\mathfrak{R}%
x\partial _{y}u_{\bot }+T g(x) (%
\widehat{u}_{y},0)^{t}\nonumber
\\&&-(u_{\bot }\cdot \nabla _{\bot })u_{\bot } 
+\frac{T}{2}(1-\mu)(\widehat{u}_y^2,0)^{t}\label{nl1}
\\
(\partial _{t}-\Delta _{\bot })\widehat{u}_{y} &=&\mathfrak{R}x\partial _{y}%
\widehat{u}_{y}+u_{x}-(u_{\bot }\cdot \nabla _{\bot })\widehat{u}_{y}, \label{nl2}
\\
\nabla _{\bot }\cdot u_{\bot }+\mathfrak{R}\partial _{y}\widehat{u}_{y} &=&0, \label{nl3}
\end{eqnarray}%
where the subscript $\bot $ denotes the  components in the $(x,z)$ plane, $\widehat{u}_{y}=\mathfrak{R}u_{y}$, and
$$
g(x)=\frac{1+\mu}{2}-(1-\mu)x.
$$
In this system, we keep the linear terms with $\mathfrak{R} \partial_y$ 
since this operator remains bounded in the limiting regime described above.

\medskip

In Section \ref{axi}, we numerically study the stability of the Couette flow for the limiting Navier-Stokes system
under axi-symmetric perturbations, which corresponds to $\mathfrak{B} =0$. We observe the existence of a critical Taylor number $T_c(\mu,0)$, which is decreasing in $\mu$,
together with a corresponding critical vertical wavenumber $\alpha_c(\mu,0)$, which is also decreasing in $\mu$.
In particular,  for $\mu = 1$, we recover the classical value $T_c(1,0) \approx 1707$.
The Couette flow (\ref{Couette}) is linearly stable with respect to axi-symmetric perturbations
if $T < T_c(\mu,0)$, and linearly unstable if $T > T_c(\mu,0)$.

\medskip

In Section \ref{nonaxi}, we study the stability of the Couette flow for the limiting Navier-Stokes system under general non-axisymmetric perturbations.
We numerically compute the critical Taylor number $T_c(\mu,\mathfrak{B})$ as a function of $\mu$ and $\mathfrak{B}$, and obtain the following conclusions:

\begin{itemize}

\item For $\mu > \mu_c \approx -0.8$, the curve $T_c(\mu,\mathfrak{B})$ is convex in $\mathfrak{B}$. As a result, the most unstable mode is axi-symmetric, and
$$
T_c(\mu,0) = \min_{\mathfrak{B}} T_c(\mu,\mathfrak{B}).
$$

\item For $\mu < \mu_c$, the curve $T_c(\mu,\mathfrak{B})$ is concave for small $\mathfrak{B}$ and convex
for larger $\mathfrak{B}$. As a result, the most unstable mode is no longer axi-symmetric, and
$$
T_c(\mu,0) > \min_{\mathfrak{B}} T_c(\mu,\mathfrak{B}).
$$
\end{itemize}

Section \ref{GL1} is devoted to the description of small-amplitude stationary solutions of the limiting system that bifurcate, in a suitable rotating frame, when $\mu > \mu_c$ is fixed and $T$ is close to $T_c(\mu,0)$.
As $\mu > \mu_c$, if $T$ is slightly larger than $T_c(\mu)$, there exists a small interval of unstable azimuthal wavenumbers, 
$(- \mathfrak{B}(\mu),+ \mathfrak{B}(\mu))$, 
where $\mathfrak{B}(\mu)$ is defined by
$$
T_c(\mu, \mathfrak{B}(\mu)) = T.
$$
In this context, small-amplitude solutions of the limiting equation that vary slowly in $y$ can be formally described by a Ginzburg–Landau equation for the amplitude $A$, of the form
\begin{equation} \label{GLeq1}
{\partial A \over \partial t} = \alpha_1 A + \alpha_2 {\partial A \over \partial y} + \alpha_3 {\partial^2 A \over \partial y^2}
- c A | A|^2,
\end{equation}
where $\alpha_1$, $\alpha_2$, $\alpha_3$ are real numbers that vanish at criticality. Moreover,  $\alpha_1>0$ for $T>T_c(\mu)$, which is justified numerically in Section \ref{nonaxi}. The coefficient $c \in \mathbb{R}$ depends on the nonlinearity and is  computed in Section \ref{GL1}.
We restrict ourselves to traveling-wave solutions. After a change of the azimuthal coordinate, this leads to the time-independent Ginzburg–Landau equation
\begin{equation} \label{GLeq2}
 \alpha_1 A  + \alpha_3 {\partial^2 A \over \partial y^2}
- c A | A|^2 = 0.
\end{equation}
For time-dependent solutions, the rigorous mathematical justification of the amplitude equation (\ref{GLeq1}) traces back to works such as those of G.~Schneider \cite{Schn}. However, since we are specifically interested in steady solutions, after an appropriate change of variables, a sufficient justification can be found in \cite{Iooss2} using spatial dynamics. In this framework, the coordinate $y$ plays the role of time, and we seek solutions that remain small and bounded for all $y \in \mathbb{R}$.

The analysis of (\ref{GLeq2}) depends on the signs of $\alpha_1$ and $c$. Numerically, we find that $c > 0$ if $\mu > \hat \mu_c \approx -0.65$, and $c < 0$ if $\mu < \hat \mu_c$.

When $\mu > \hat \mu_c$,  we have $c > 0$, $\alpha_3<0$, so the bifurcation is supercritical. This is the same as in the special case $\mu = 1$, which has been detailed in \cite{BGGIY}. 
Therefore, for $\mu > \hat \mu_c$, the results of \cite{BGGIY} apply directly, and we do not repeat them here.

In Section \ref{GL1}, we study the regime $\mu_c < \mu < \hat \mu_c$.
Since $\alpha_3$ remains negative in this case, the situation differs from that in \cite{BGGIY}, and the bifurcation is now subcritical.
Theorem \ref{thm:case1} details the various stationary solutions of the stationary Ginzburg Landau equation (\ref{GLeq2}) that exist for $T$ close to $T_c(\mu,0)$. Besides the Couette solution $A = 0$, there exists
Taylor Vortex Flows (TVF), for which $A$ is constant, and wavy vortices (WV), for which $A$ is periodic and takes a form of plane wave in $y$.
There are also more complex quasi-periodic solutions. For $T<T_c(\mu,0)$, there exist solutions that resemble the Couette flow with a superposed localized perturbation, periodic in $z$, and traveling in the azimuthal direction.

According to the spatial dynamics theory, each of these solutions of the reduced equation (\ref{GLeq2})
corresponds to a genuine solution of our limiting Navier-Stokes system (\ref{nl1},\ref{nl2},\ref{nl3}).

Section \ref{GL2} is devoted to the regime in which $\mu$ is close to $\mu_c$ and $T$ is close to $T_c(\mu_c,0)$.
This leads to two bifurcation parameters $\tau$ and $\sigma$, where $\tau>0$ for $T>T_c(\mu_c,0)$ 
and $\sigma=0$ for $\mu=\mu_c$.
The time-independent Ginzburg-Landau equation is then of fourth order and takes the form
\begin{equation} \label{GLL}
\partial_y^4 A = \tau A + \sigma \partial_y^2 A - c A |A|^2
\end{equation}
with $c <  0$.
Theorem \ref{thm:case2} describes the stationary solutions of (\ref{GLL}) with constant
phase in various regions of the $(\tau,\sigma)$ plane. 

First, when $\sigma>0$ and $|\tau|/\sigma^2 \ll 1$,
we obtain the natural limit case ($\alpha_3 \rightarrow 0^{-}$) corresponding to Theorem \ref{thm:case1}. 

Second, when $\sigma<0$, we first study the case $|\tau|/\sigma^2 \ll 1$. In addition to the subcritically bifurcating TVF, 
there exists
\begin{itemize}
\item  a two-parameter family of quasi-periodic flows;
\item a one-parameter family of ``tubes" of heteroclinic orbits connecting a periodic flow in $y$ to itself, 
shifted by half an axial period. 
These solutions exist as long as the diameter of the limiting periodic solution is not too small; see Theorem \ref{thm:case2} for a precise statement.
\end{itemize}

Third, we study the case $|\tau/\sigma^2+1/4| \ll 1$. In this region, we have a subcritical bifurcation, leading to
quasi-periodic flows. Moreover, for $\tau<-\sigma^2/4$, there exists bifurcating homoclinic flows connecting the Couette flow to itself. These homoclinic solutions are oscillatory in $y$, with amplitudes that vanish as $y \to \pm \infty$.
To an observer, they appear as localized traveling perturbations superposed on the Couette flow.

\medskip

Note that all these solutions are periodic in $z$.


\section{The small gap approximation \label{smallgap}}


Working in cylindrical coordinates, we consider a perturbation $(u_r, u_\varphi, u_z, p)$, not necessarily axisymmetric, to the Couette flow defined in \eqref{Couette}. This perturbation satisfies (see \cite{BGGIY,Nagata23}):
\begin{align*}
   \Big( \frac{\partial}{\partial t} - \nu (\Delta_{pol} - &\frac{1}{r^2}) \Big) u_r 
   + \nu \frac{2}{r^2} \frac{\partial u_\varphi}{\partial \varphi} 
   + \frac{\partial p}{\partial r} 
   \\
   &= - \Omega_{rot} \frac{\partial u_r}{\partial \varphi} + 2\Omega_{rot} u_\varphi  
   + \frac{u_\varphi^2}{r} + 2\Omega_{rf} u_\varphi - (u \cdot \nabla) u_r,
   \\
   \Big( \frac{\partial}{\partial t} - \nu (\Delta_{pol} &- \frac{1}{r^2}) \Big) u_\varphi 
   - \nu \frac{2}{r^2} \frac{\partial u_r}{\partial \varphi} + \frac{1}{r} \frac{\partial p}{\partial \varphi} 
   \\
   &= - \Omega_{rot} \frac{\partial u_\varphi}{\partial \varphi} { - U' u_r - \Omega_{rot} u_r} 
   - \frac{u_r u_\varphi}{r} - 2\Omega_{rf} u_r  - (u \cdot \nabla) u_\varphi,
   \\
\Big( \frac{\partial}{\partial t} - &\nu \Delta_{pol}  \Big) u_z 
+ \frac{\partial p}{\partial z} 
= - \Omega_{rot} \frac{\partial u_z}{\partial \varphi}  - (u \cdot \nabla) u_z,
\end{align*}
together with the incompressibility condition in polar coordinate
$$
\frac{\partial u_r}{\partial r} + \frac{u_r}{r} + \frac{1}{r} \frac{\partial u_\varphi}{\partial \varphi} 
+ \frac{\partial u_z}{\partial z} = 0,
$$
Coriolis terms appear on the right-hand side, and $\Delta_{pol}$ denotes the Laplacian in cylindrical coordinates
$$
\Delta_{pol} = \frac{\partial^2}{\partial r^2} + \frac{1}{r} \frac{\partial}{\partial r} 
+ \frac{1}{r^2} \frac{\partial^2}{\partial \varphi^2} + \frac{\partial^2}{\partial z^2}.
$$
Rescaling as in \cite{BGGIY}, according to (\ref{rescaling}), we obtain
\begin{align*}
   \Big( \frac{\partial}{\partial  {t}} -  \Delta \Big)  {u}_{ {x}} 
   + \frac{\partial   p}{\partial  {x}} 
   =& \mathfrak{R} {x} \frac{\partial  {u}_{ {x}}}{\partial  {y}} 
   +  {\omega}(1 + \mu)  {u}_{ {y}} - ( {u} \cdot \nabla)  {u}_{ {x}} 
   +O \Bigl( {\omega} (1 - \mu)\frac{\partial  {u}_{ {x}}}{\partial  {y}} \Bigr)
   \\
   &\quad + O(1 - \eta)^2  {u}_{ {x}}
  - 2\Big( {\omega} (1 - \mu)   {x} + O(1 - \eta) \Big) {u}_{ {y}} 
  \\
  &\quad +  \Big((1 - \eta)^2 x + \frac{(1-\eta^2)}{2} \Big) {u}_{y}^2
   +  O(1 - \eta) \Big( \frac{\partial  {u}_{ {y}}}{ \partial  {y}} \Bigr),
 \\
   \Big( \frac{\partial}{\partial  {t}} -  \Delta \Big)  {u}_{ {y}} 
   + \frac{\partial   p}{\partial  {y}} 
=& \mathfrak{R} {x}\frac{\partial  {u}_{ {y}}}{\partial  {y}} + 
 {\omega} \frac{(1 - \mu)(1 + \eta^2)}{1 - \eta^2}  {u}_{ {x}} -  {\omega}(1 + \mu)  {u}_{ {x}} - ( {u} \cdot \nabla)  {u}_{ {y}}
  \\
   & 
+  O(1 - \eta) \Big(  {u}_{ {x}}  {u}_{ {y}} + \frac{\partial  {u}_{ {x}}}{ \partial  {y}} \Bigr) 
+ O \Bigl( {\omega} (1 - \mu)\frac{\partial  {u}_{ {y}}}{\partial  {y}} \Bigr)+ O(1 - \eta)^2  {u}_{ {y}},
\\
   \Big( \frac{\partial}{\partial  {t}} -  \Delta \Big)  {u}_{ {z}} 
   + \frac{\partial   p}{\partial  {z}} 
   =& \mathfrak{R} {x}\frac{\partial  {u}_{ {z}}}{\partial  {y}}  - ( {u} \cdot \nabla)  {u}_{ {z}} 
   +O \Bigl(  {\omega} (1 - \mu)\frac{\partial  {u}_{ {z}}}{\partial  {y}} \Bigr) ,
\end{align*}
$$
    \frac{\partial  {u}_{ {x}}}{\partial  {x}} 
    +\frac{\partial  {u}_{ {y}}}{\partial  {y}} + \frac{\partial  {u}_{ {z}}}{\partial  {z}} 
    = O(1-\eta) \Bigl(  {u}_{ {x}} + \frac{\partial  {u}_{ {y}}}{ \partial  {y}} \Bigr),\nonumber
$$
where we notice that the term $r^{-1} u_{\varphi}^2$ in the original system becomes, after scaling, $ \Big((1 - \eta)^2 x + \frac{(1-\eta^2)}{2} \Big) {u}_{y}^2$.
We then rescale $u_y$ in 
\[
u_{y}=\mathfrak{R}\widehat{u}_{y},
\]%
and neglect higher-order terms  (such as $\mathfrak{R}^{-1}\partial _{y}p$, terms of order 
$O(1-\eta )$, terms of order $O(\omega )$). This yields the simplified system
\begin{eqnarray*}
(\partial _{t}-\Delta )u_{x}+\partial _{x}p &=&\mathfrak{R}x\partial
_{y}u_{x}+Tg(x) \widehat{u}%
_{y}-(u\cdot \nabla )u_{x} +\frac{T}{2}(1-\mu)(\widehat{u}_y)^2\\
(\partial _{t}-\Delta )\widehat{u}_{y} &=&\mathfrak{R}x\partial _{y}\widehat{%
u}_{y}+u_{x}-(u\cdot \nabla )\widehat{u}_{y} \\
(\partial _{t}-\Delta )u_{z}+\partial _{z}p &=&\mathfrak{R}x\partial
_{y}u_{z}-(u\cdot \nabla )u_{z}, \\
\partial _{x}u_{x}+\partial _{z}u_{z}+\mathfrak{R}\partial _{y}\widehat{u}%
_{y} &=&0,
\end{eqnarray*}%
where
$$
g(x) = {1 + \mu \over 2} - (1 - \mu) x.
$$
Note that when $\mu = 1$, $g(x) = 1$ whereas when $\mu = -1$, $g(x) = - 2x$.

We now consider solutions that vary slowly in $y$. For such solutions, the second-order derivatives
$\partial _{y}^{2}$ may be suppressed in linear terms, while terms involving $\mathfrak{R}\partial _{y}$ must be retained. In the nonlinear terms, however,  terms involving $\mathfrak{R}\partial _{y}$ can be neglected. This leads to (\ref{nl1},\ref{nl2},\ref{nl3}).

Below, we give new solutions of the limiting Navier-Stokes system (\ref{nl1},\ref{nl2},\ref{nl3}); see Theorems \ref{thm:case1} and \ref{thm:case2}.


\section{Axi-symmetric instabilities \label{axi}}


We begin by studying the stability of the Couette flow for the system (\ref{nl1},\ref{nl2},\ref{nl3}) with respect to axi-symmetric perturbations; that is, perturbations that are independent of $y$. This leads to analyzing the eigenvalues $\lambda$ of the following linear system:
\begin{eqnarray}
\lambda u_{x} &=&(D^{2}-\alpha ^{2})u_{x}-Dp+ T g(x) \widehat{u}_y,  \label{eee1} \\
\lambda \widehat{u}_{y} &=&(D^{2}-\alpha ^{2})\widehat{u}_{y}+u_x,  \label{eee2} \\
\lambda u_{z} &=&(D^{2}-\alpha ^{2})u_{z}-i\alpha p,  \label{eee3} \\
0 &=&Du_{x}+i\alpha u_{z}, \label{eee4}
\end{eqnarray}
subject to the boundary conditions $u_x = D u_x = \widehat{u}_y = 0$ at $x = \pm 1/2$.

Combining (\ref{eee4}) and (\ref{eee3}), we obtain
\[
(\lambda+\alpha^2 - D^2) D u_x + \alpha^2 p = 0.
\]
Applying the operator $D$ to this identity and combining the result with (\ref{eee1}) yields
\begin{eqnarray}
(\lambda +\alpha ^{2}-D^{2})(\alpha^{2} - D^{2} )u_{x} &=& \alpha ^{2}T g(x) \widehat{u}_{y},  \label{linear1} \\
(\lambda +\alpha ^{2}-D^{2})\widehat{u}_{y} &=&u_{x}. \label{linear2}
\end{eqnarray}
For $\mu = 1$, it was proved in \cite{BGGIY} that for any $T$, 
the corresponding linear operator is self-adjoint with respect to an appropriately chosen scalar product. This property no longer holds when $\mu \ne 1$ due to the $x$-dependence of $g(x)$.

For $T = 0$, however, the linear operator is self-adjoint even when $\mu < 1$, 
and all its eigenvalues $\lambda_n$ are real and negative. Let $\lambda_0$ denote the largest , that is, the least negative, eigenvalue. These eigenvalues depend smoothly on $T$ as long as they remain distinct. Since the system's coefficients are real, an eigenvalue $\lambda_n(T)$ can become complex only if it ``collides'' with another real eigenvalue. Consequently, if no such collision occurs prior to the onset of instability, then the instability arises precisely when $\lambda_0(T)$, which remains real, crosses zero.

It is therefore reasonable to conjecture that, for values of $\mu$ sufficiently close to $1$, a zero eigenvalue appears at criticality. Numerical simulations confirm this conjecture: for $\mu > \mu_c \approx -0.8$, the first eigenvalue crosses the imaginary axis at zero.

Figure \ref{Tcmu} shows the critical Rayleigh number $T_c(\mu, 0)$ and the corresponding wave-number $\alpha_c$ as functions of $\mu$ for axi-symmetric fields. We observe that $T_c(\mu,0)$ decreases monotonically with $\mu$, reaching its minimum at $\mu = 1$ with $T_c \approx 1707$. Meanwhile, $\alpha_c$ also decreases with $\mu$, varying from approximately $4$ at $\mu = -1$ to about $3.1$ at $\mu = 1$.

\begin{figure}[th]
\begin{center}
\includegraphics[width=6cm]{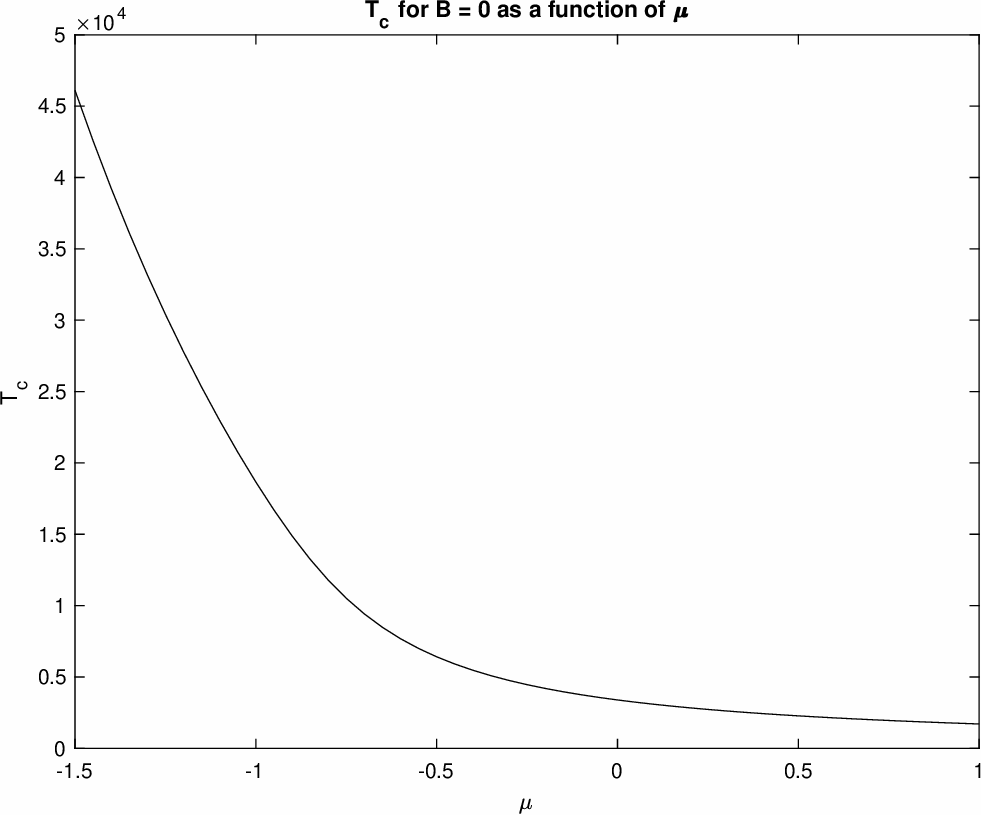}
\includegraphics[width=6cm]{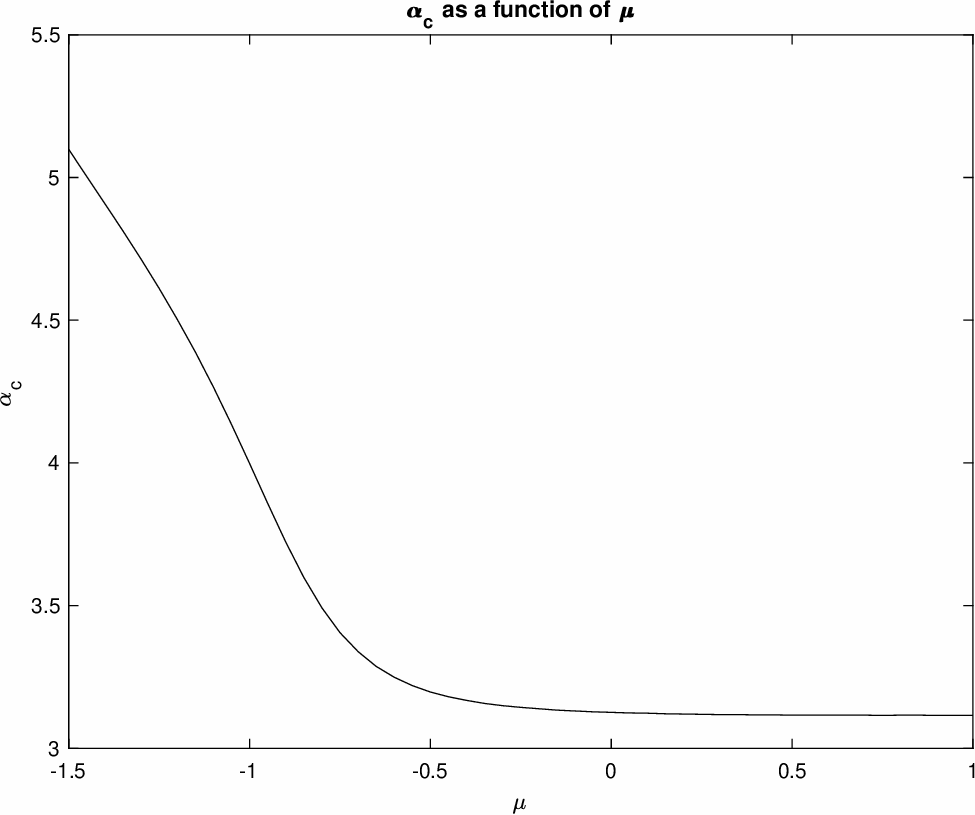}
\end{center}
\caption{$T_c$ (left) and $\alpha_c$ (right) as functions of $\mu$ for axi-symmetric fields.}
\label{Tcmu}
\end{figure}

Numerical verification further gives the expansion
\[
\tau = T - T_{c} = a(\mu) (\alpha ^{2} - \alpha _{c}^2)^{2} + O \Bigl( (\alpha^2 - \alpha_c^2)^3 \Bigr),
\]
with $a(\mu) > 0$ (the corresponding numerical simulations are not shown here).

\begin{remark}
    Note that in the standard Couette--Taylor problem with counter-rotating cylinders, the critical state near $T=0$ also corresponds to a zero eigenvalue. This configuration allows for a \emph{subcritical bifurcation} of TVF (see \cite{Iooss}, bottom of p.41, Figure III.2, for the parameters $\Omega\,(=\mu)=-0.65$ and $\eta=0.95$).
\end{remark}


\section{Non axi-symmetric instabilities \label{nonaxi}}


Let us now study non axi-symmetric perturbations, of the form $(u_x, u_y, u_z) e^{i(\alpha z+\beta y)}$.
Then the eigenvalues $\lambda$ of the linearized system satisfy
\begin{eqnarray*}
(\lambda +\alpha ^{2}-D^{2}-i\mathfrak{B}x)u_{x}+Dp &=&T g(x) \widehat{u}_{y} \\
(\lambda +\alpha ^{2}-D^{2}-i\mathfrak{B}x)\widehat{u}_{y} &=&u_{x}\\
(\lambda +\alpha ^{2}-D^{2}-i\mathfrak{B}x)u_{z}+i\alpha p &=&0 \\
Du_{x}+i\mathfrak{B}\widehat{u}_{y}+i\alpha u_{z} &=&0,
\end{eqnarray*}%
where $\mathfrak{B}=\beta\mathfrak{R}$.
This leads to 
\begin{eqnarray}
(\lambda +\alpha ^{2}-D^{2}-i\mathfrak{B}x)(\alpha^2 - D^{2})u_{x}
&=&\alpha ^{2}Tg(x) \widehat{u}_{y} \\
(\lambda +\alpha ^{2}-D^{2}-i\mathfrak{B}x)\widehat{u}_{y} &=&u_{x}, \label{new ev pb}
\end{eqnarray}%
with the boundary conditions $u_x = D u_x = u_y = 0$ at $x = \pm 1/2$.

We see that replacing $\mathfrak{B}$ by $-\mathfrak{B}$ changes $%
\lambda $ to $\overline{\lambda}$. However, the supplementary
symmetry used in \cite{BGGIY} is no longer available here due to the $x$-dependence of $g(x)$. It results that the Taylor expansion of the
critical eigenvalue at $\alpha =\alpha _{c} is now $%
\[
\lambda _{0}=ib_{1}\mathfrak{B}+a_{3}(T - T_c) +a_{4}\mathfrak{B}^{2}+ \cdots, 
\]%
where  the coefficients $b_{1},a_{3},a_{4}$ are real. 

We already know that $a_{3}>0$, by definition of $T_c$.
The sign of $a_4$ is however unknown and must be evaluated numerically.

We now study numerically $T_c(\mu,\mathfrak{B})$ as a function of $\mathfrak{B} \neq 0$. For $\mu > \mu_c \approx -0.8$,  the curve
$T_c(\mu,\mathfrak{B})$, with $\mu$ fixed, is convex  and quadratic; see figure \ref{TcB0}.
By contrast, when $\mu < \mu_c$, $T_c$ first decreases and then increases as $\mathfrak{B}$ increases.
This implies that $a_4(\mu) < 0$ if $\mu > \mu_c$ and $a_4(\mu) > 0$ if $\mu < \mu_c$.

\begin{figure}[th]
\begin{center}
\includegraphics[width=6cm]{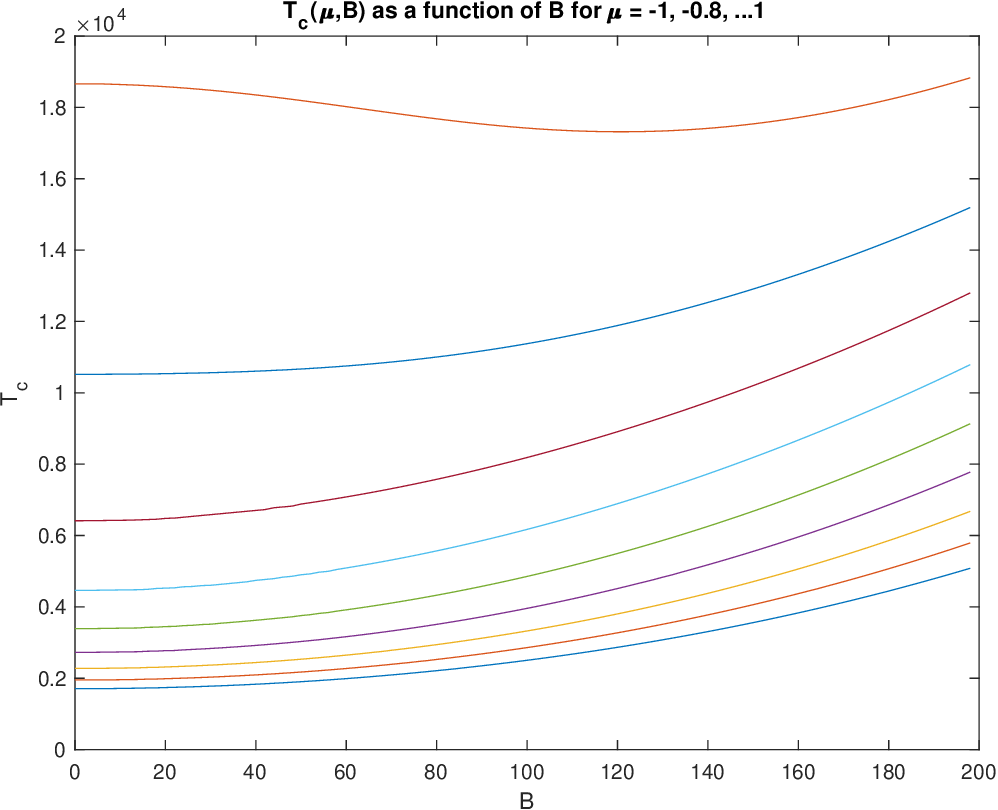}
\includegraphics[width=6cm]{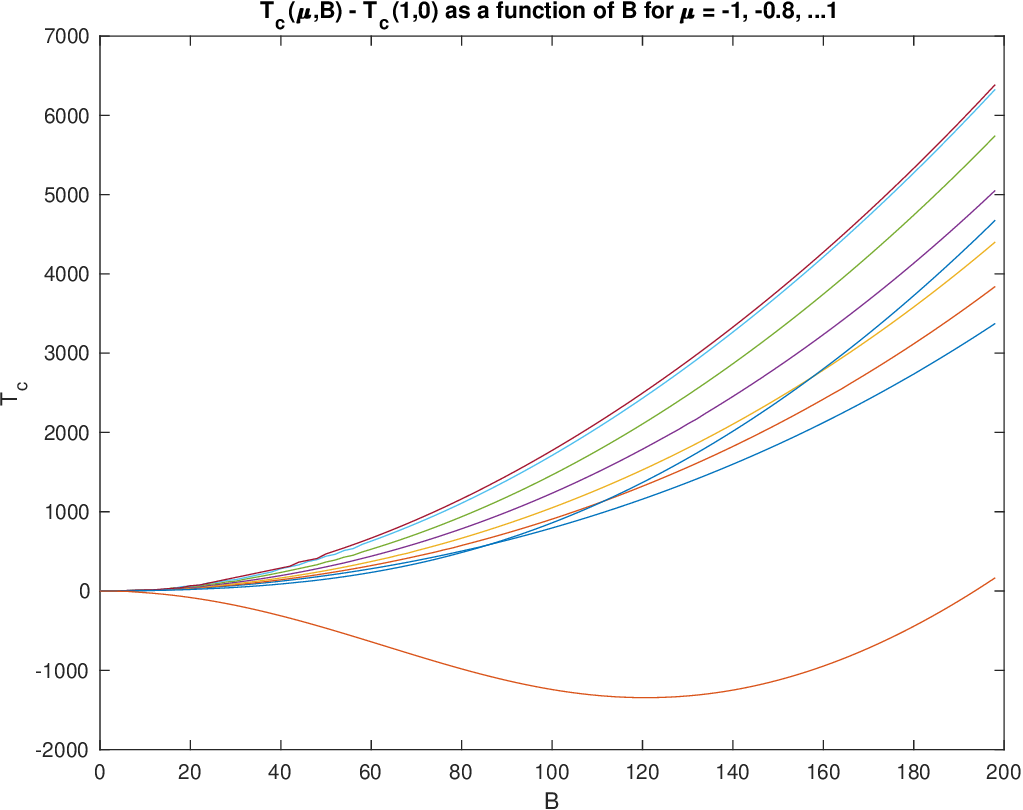}
\end{center}
\caption{Left: $T_c(\mu,\mathfrak{B})$ as a function of $\mathfrak{B}$ for $\mu = -1, -0.8, ..., 1$. The
lower curve corresponds to $\mu = 1$.
Right: $T_c(\mu,\mathfrak{B}) - T_c(\mu,0)$ as a function of $\mathfrak{B}$ for $\mu = -1, -0.8, ..., 1$. The lower curve corresponds to $\mu=-1$.
}
\label{TcB0}
\end{figure}



\section{Study of the case $\mu_c < \mu < \hat \mu_c$ \label{GL1} \label{G-L equ1}}


As discussed in the introduction, 
small-amplitude traveling-wave solutions of the limiting system~(\ref{nl1},\ref{nl2},\ref{nl3}) that vary slowly in $y$ can be described,
after choosing a suitable rotation rate for the reference frame, by a time-independent Ginzburg–Landau equation \eqref{timeGL}. This follows from the ideas of spatial dynamics theory.
In our context, this approach yields a complex second-order ordinary differential equation for the amplitude $A(y)$, whose principal part coincides with the equation for steady solutions studied below.

For $\mu > \mu_c$,  numerically we know that $a_4 < 0$, and we introduce $b_4 = - a_4 > 0$.
The time-dependent Ginzburg–Landau equation is then
\[
\frac{\partial A}{\partial t} = a_{3}\tau A + b_{1}\mathfrak{R} \frac{\partial A}{\partial y}
+ b_{4}\mathfrak{R}^{2} \frac{\partial^{2} A}{\partial y^{2}} - cA|A|^{2},
\]
where $c$ is a real number and $\tau$ the bifurcation parameter. 
The computation of $c$ is detailed in Appendix \ref{App:calcul c}.
Numerical computations show that the sign of $c$ changes for $\mu=\hat \mu_c \approx -0.65$.
In this section, we focus on $\mu$ such that  
$\mu_c < \mu < \hat \mu_c$.
In this case $c < 0$,
which corresponds to a subcritical bifurcation of the Taylor vortex flow (TVF). 
Performing the change of variables
\begin{eqnarray}
A(y,t) &=& \widetilde{A}(\widetilde{y},t), \label{changevaraiable} \\
\widetilde{y} &=& \frac{y}{\mathfrak{R}} + b_{1} t, \nonumber
\end{eqnarray}
transforms the equation into the form (dropping the tildes)
\[
\frac{\partial A}{\partial t} = a_{3}\tau A + b_{4} \frac{\partial^{2} A}{\partial y^{2}} - cA|A|^{2},
\]
which coincides with Eq.~(22) in \cite{BGGIY}.

Steady solutions correspond to traveling-wave solutions in the physical domain, 
propagating in the azimuthal direction with velocity $-b_{1}$, in addition to the angular velocity $\Omega_{rf}$ of the rotating frame. 
Time-independent solutions $A(y)$ satisfy the time-independent Ginzburg-Landau equation 
\begin{equation} \label{timeGL}
a_3 \tau A + b_4 {\partial^2 A \over \partial y^2} = c A |A|^2.
\end{equation}
The equation (\ref{timeGL}) admits the following simple solutions

\begin{itemize}
\item $A(y) = 0$, which corresponds to the Couette flow,

\item $A(y) = \rho$ with 
$c \rho^2 = a_3 \tau$, which corresponds to Taylor Vortex Flow (TVF) that bifurcates from the Couette flow at $T_c$,

\item $A(y) = \rho e^{i \beta y}$
where
\[
-c\rho ^{2}=b_{4}\beta ^{2}-a_{3}\tau.
\]%
This corresponds to wavy vortices (WV).

\end{itemize}

In this section, we prove that (\ref{timeGL}) has other solutions, which depends in a more complex way of $y$. 
  These solutions correspond to solutions of the  limiting Navier-Stokes equations (\ref{nl1},\ref{nl2},\ref{nl3}), which are traveling rotating waves, periodic in the $z$ direction.

\begin{theorem}\label{thm:case1}
    Let  $\mu_c<\mu<\hat\mu_c$, and let $\alpha = \alpha_c(\mu)$. 
    Then $c<0$ and $b_4>0$, and the  $z$-periodic Taylor vortex flow with axial period $2\pi/\alpha_c$ bifurcates subcritically at $T = T_c$.
    Let $\tau = T - T_c$ and assume that $|\tau|$ is small enough. Then
    \begin{enumerate}
        \item[(i)] There exists a two-parameter family of quasi-periodic in $y$ solutions that bifurcate. For $\tau<0$, they are either perturbations of Taylor vortices for half of the axial period, or perturbations of the full Taylor vortices. For $\tau>0$, they are perturbations of Couette flow (see Figures \ref{Fig2} and \ref{Fig4}).
        \item[(ii)] For $\tau<0$, there also exists a traveling rotating wave in the form of the Couette flow, with an added traveling localized perturbation that is periodic in $z$ (see Figure \ref{Fig2}).
    \end{enumerate}
\end{theorem}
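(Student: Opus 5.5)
The plan is to reduce the statement to a phase-plane analysis of the stationary Ginzburg--Landau equation (\ref{timeGL}). Then I transfer the resulting orbits to the limiting system (\ref{nl1},\ref{nl2},\ref{nl3}) by the spatial dynamics reduction of \cite{Iooss2}.

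\textbf{Signs and the constant solutions.} The signs $c<0$ (for $\mu<\hat\mu_c$) and $b_4=-a_4>0$ (for $\mu>\mu_c$) are taken from the numerics of Sections \ref{nonaxi} and \ref{GL1}, together with $a_3>0$. The TVF branch $c\rho^2=a_3\tau$ then requires $\tau<0$, which is the subcritical bifurcation. Rescaling $y$ and $A$, (\ref{timeGL}) becomes
$$A''=-\epsilon A-A|A|^2,\qquad \epsilon=\operatorname{sign}(\tau)\in\{\pm1\}.$$

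\textbf{Two first integrals.} This complex equation is Hamiltonian, invariant under $A\mapsto e^{i\phi}A$ (translation in $z$), and reversible under $y\mapsto -y$. Writing $A=re^{i\theta}$, it has two first integrals:
$$J=r^2\theta',\qquad E=\tfrac12 r'^2+\tfrac{J^2}{2r^2}+\tfrac{\epsilon}{2}r^2+\tfrac14 r^4 .$$
The problem therefore reduces to the one-degree-of-freedom equation
$$r''=\frac{J^2}{r^3}-\epsilon r-r^3 .$$

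\textbf{Reading off the solutions in (i) and (ii).}
\begin{itemize}
\item For $\tau<0$ ($\epsilon=-1$) and $J=0$, the real equation $r''=r-r^3$ has a saddle at $0$, centers at $\pm1$ (the TVF and its half-period shift $A\mapsto -A$), and a pair of homoclinic loops $r=\pm\sqrt2\,\mathrm{sech}\,y$.
\item The homoclinic loops give (ii): the Couette flow with a localized perturbation, traveling because of the change of variables (\ref{changevaraiable}).
\item The closed orbits inside one loop, around $r=\pm1$, give perturbations of Taylor vortices over half an axial period.
\item The closed orbits outside both loops, which visit both signs of $A$, give perturbations of the full Taylor vortices.
\item For $J\neq0$, the effective potential remains confining, so periodic $r(y)$ exists for an interval of energies. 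Then $\theta(y)=\int J/r^2$ has a generically incommensurate mean drift, so $A$ is quasi-periodic with two frequencies, parametrized by $(J,E)$.
\item For $\tau>0$ ($\epsilon=+1$) the only equilibrium is the center $r=0$ (for $J=0$), so all bounded orbits are periodic or quasi-periodic perturbations of Couette flow.
\end{itemize}

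\textbf{Persistence for the full reduced system (main obstacle).} The spatial dynamics reduction of \cite{Iooss2} gives a four-dimensional center manifold for (\ref{nl1},\ref{nl2},\ref{nl3}) in the variable $y$. After normal form, the reduced vector field is (\ref{timeGL}) plus higher-order terms. It inherits the $SO(2)$ equivariance from the $z$-translations and the reversibility from $y\mapsto -y$ (combined with the reflection used in the traveling frame).

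I would first quotient by $SO(2)$. The quantity $J$ remains a first integral of the equivariant normal form up to the order needed, and the reduced three-dimensional system is reversible. Symmetric periodic orbits of a reversible system come in one-parameter families and persist under small reversible perturbations, by the standard argument of intersecting the fixed-point set of the reversor with its image. Applied for each fixed $J$ (or along a one-parameter family if $J$ is only approximately conserved), this yields the two-parameter family in (i).

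For (ii), with $J=0$ the homoclinic solution is reversible and lies in the invariant subspace of real $A$ (modulo phase). There it is a transversal intersection of the stable manifold with the symmetric section in a planar reversible system, so it persists.

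The delicate point is justifying that the higher-order terms preserve enough structure, namely equivariance and reversibility, near $|\tau|\to0$. I would handle this by the scaling $A=|\tau|^{1/2}\tilde A$, $y=|\tau|^{-1/2}\tilde y$, which makes the remainder $O(|\tau|^{1/2})$ in $C^1$ on compact sets. Non-degeneracy of the limiting orbits then suffices.
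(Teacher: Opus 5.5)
Your treatment of the truncated equation is the paper's proof. After your rescaling, $E$ and $J$ are the paper's first integrals $H$ and $K$. Your reading of the phase portraits coincides with the paper's: the saddle at $0$, the centres at the TVF and its $\pi/\alpha_c$-shift, the homoclinic loops, the inner and outer period annuli for $\tau<0$, the single centre for $\tau>0$, and for $J\neq0$ a confining effective potential whose phase is a linear drift plus a periodic part. The paper disposes of persistence in one line by citing \cite{Haragus}. The problem lies in the explicit mechanism you supply for that step.

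The gap is your claim that the reduced vector field inherits a reversibility $y\mapsto-y$. For $\mu\neq1$ the limiting system (\ref{nl1})--(\ref{nl3}) has no such symmetry. To preserve $\mathfrak{R}x\partial_y$ and the walls $x=\pm1/2$, a $y$-reversal must be $(x,y)\mapsto(-x,-y)$, $(u_x,\hat u_y,u_z)\mapsto(-u_x,-\hat u_y,u_z)$, and in a moving frame this works only at zero speed. That candidate is broken by $g(-x)\neq g(x)$ and by the term $\frac{T}{2}(1-\mu)\hat u_y^2$. It is precisely the supplementary symmetry of \cite{BGGIY} that Section \ref{nonaxi} says is lost, which is why $b_1\neq0$. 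Since $\lambda_0(-\mathfrak{B})=\overline{\lambda_0(\mathfrak{B})}$, odd $y$-derivatives enter the amplitude equation with real coefficients. The frame change removes only $b_1\partial_yA$; the $\mathfrak{B}^3$ term and nonlinear terms such as $|A|^2\partial_yA$ and $A^2\partial_y\bar A$ survive.

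In your scaling these terms are $O(|\tau|^{1/2})$ but odd in the derivative. The real subspace is invariant, because $z\mapsto-z$ (i.e. $A\mapsto\bar A$) does survive. On it you get $\tilde r''=\tilde r-\tilde r^3+|\tau|^{1/2}(\kappa_0+\kappa_1\tilde r^2)\tilde r'+\cdots$ with real constants $\kappa_0,\kappa_1$. The Melnikov integral along $r_h=\sqrt2\,\mathrm{sech}\,y$ is $M=\frac43\kappa_0+\frac{16}{15}\kappa_1$, which is generically nonzero. So the homoclinic loop splits, and at fixed speed the period annuli collapse to isolated cycles. Smallness of the remainder plus ``non-degeneracy'' cannot prevent this: a homoclinic loop or a continuum of periodic orbits of a planar flow is not robust without reversibility or a first integral.

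The missing idea is to treat the wave speed as an unfolding parameter. Changing it adds a real multiple of $\partial_yA$, i.e. it shifts $\kappa_0$. Since $\partial M/\partial\kappa_0=\int(r_h')^2\,dy\neq0$, you can solve $M=0$ for the speed. This recovers (ii) and the $K=0$ periodic solutions, each travelling with its own selected speed, a small correction to the one given by $b_1$. For $K\neq0$, $J$ is no longer conserved either. You must therefore solve averaged conditions for both $E$ and $J$, with the speed as an extra unknown, and redo the parameter count rather than quote the reversible two-parameter result.
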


\begin{remark}
The solutions are quasi-periodic because their amplitude is periodic whereas their phase is the sum of
a linear term and a periodic term (with the same period as the amplitude).
\end{remark}

\begin{proof}
Let us first discuss the wavy vortices and their links with the Couette flow and the Taylor vortex flow.

We note that the wavy vortices 
bifurcate from the Couette flow which (corresponding to $\rho =0$) for 
$$
\tau \leq \frac{b_{4}%
}{a_{3}}\beta ^{2};
$$ 
see Figure \ref{Fig1}. 
\begin{figure}[th]
\begin{center}
\includegraphics[width=5cm]{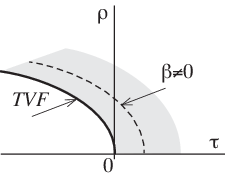}
\end{center}
\caption{Bifurcation diagram in the case $c<0$. The wavy vortices ($\beta\neq0$) are bifurcating branches parallel to the TVF branch, branching from the Couette flow at some $\tau>0$.}
\label{Fig1}
\end{figure}
The Taylor vortex flow $(TVF)$ corresponds to $\beta =0$ and is steady and
independent of $y$ for the observer, while the $(WV)$ is time periodic and
travels with a constant velocity in the azimuthal direction.




Now we study the solutions of the integrable equation
\[
a_{3}\tau A + b_{4}\frac{\partial^{2}A}{\partial y^{2}} - cA|A|^{2} = 0.
\]
Using the polar form 
\[
A = \rho e^{i\theta}.
\]
As in \cite{BGGIY}, we obtain the first integrals $H,K$ such that
\begin{align}
\rho^{2} \theta^{\prime} &= K, \label{eq:first_integral_theta} \\
\rho^{\prime 2} &= f(\rho^{2}), \label{eq:first_integral_rho}
\end{align}
where 
\[
f(X) = -\frac{K^{2}}{X} + \frac{c}{2b_{4}} X^{2} - \frac{a_{3}}{b_{4}}\tau X + H.
\]
Now $H$ and $K$ are two new parameters.

The wavy vortex $(WV)$ solutions correspond to the particular case $\theta^{\prime} = \beta$, 
$K = \beta\rho^{2}$, with $f(\rho^{2}) = 0$ and $f^{\prime}(\rho^{2}) = 0$.

We note that (\ref{eq:first_integral_theta}) is independent from (\ref{eq:first_integral_rho}), thus we may first solve (\ref{eq:first_integral_theta}) to determine $\rho(y)$ and $\rho'(y)$ and then use the knowledge of $\rho(y)$ to deduce $\theta'$ through
$$
\theta' = {K \over \rho^2}.
$$
Let us first study the particular case $K = 0$,  where $\theta$ is a constant.
The solutions of (\ref{eq:first_integral_theta}) are given by the level lines of $\rho'^2 - f(\rho^2) = 0$, which are parametrized by $H$.
The phase portraits in the cases $\tau > 0$ and $\tau < 0$ are qualitatively very different, as depicted 
 in Figure \ref{Fig2}.
\begin{figure}[th]
\begin{center}
\includegraphics[width=10cm]{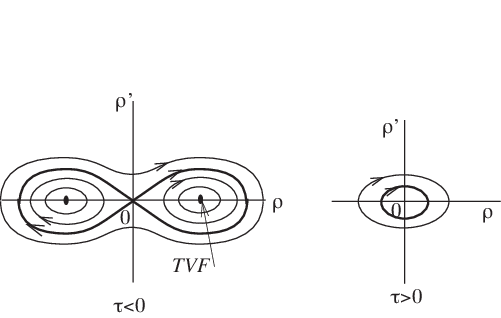}
\end{center}
\caption{Phase portraits in the case $c<0$, for $K=0$.}
\label{Fig2}
\end{figure}

For $\tau >0$ the equilibrium at $0$ corresponds to the Couette flow.
We also find solutions in which  $\rho $ is periodic in $y$ and the
argument of $A$  is independent of $y$. Due to the change of variables (\ref{changevaraiable}), 
these solutions, which are close to the Couette flow, 
are traveling waves with a periodic modulation in their amplitude.

For $\tau <0$, we can distinguish between four different types of solutions:

\begin{itemize}

\item Stationary solutions: the Couette flow ($\rho =0)$  and the TVF ($H = \frac{(a_3\tau)^2}{2cb_4} < 0$).

\item Periodic solutions which oscillates near half of the period of the TVF flow ($\frac{(a_3\tau)^2}{2cb_4} < H < 0, \rho$ has a constant sign).

\item Periodic solutions for which $\rho$ changes sign ($H > 0$).

\item Homoclinic solutions corresponding to $H = K = 0$
where $\rho (y)$ tends towards $0$ as $y\rightarrow \pm
\infty .$ \emph{This corresponds to a traveling rotating wave, resembling the Couette flow for most values of $y$, with an added traveling perturbation in the azimuthal direction, and still periodic in $z$ .}
\end{itemize}
All these solutions for $K=0$  of (\ref{timeGL}) persist under perturbation, by standard arguments developed for instance in \cite{Haragus}, as solutions of the limiting Navier-Stokes equations (\ref{nl1},\ref{nl2},\ref{nl3}).

We now turn to the case $K \neq 0$,
which leads to Figure \ref{Fig3} 
\begin{figure}[th]
\begin{center}
\includegraphics[width=11cm]{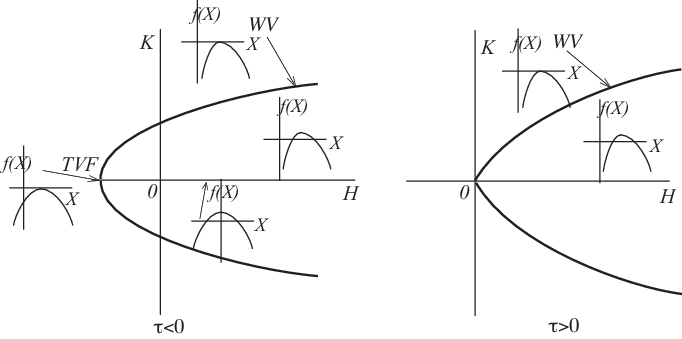}
\end{center}
\caption{$f(X)$ for $K\neq0$. Wavy vortices (WV) correspond to the limit curves.}
\label{Fig3}
\end{figure}
where we sketch the graph of $f(X)$ depending on $(H,K)$ and on the sign of $\tau$. 
This leads to distinguishing between several cases in the case $K \ne 0$:

\begin{itemize}

\item On the left of the parabola like curve, there is no small solution since the maximum of $f$ is negative.

\item On the parabola like curve, the maximum of $f$ is exactly $0$ and reached at only one point.
This corresponds to solutions with a constant $\rho$, namely to wavy vortcies.

\item On the right of the parabola like curve, $f$ has exactly two zeroes. In this case $\rho(y)$
oscillates between these two zeroes.
 The corresponding phase portrait is depicted  in Figure \ref{Fig4}.
 \begin{figure}[th]
\begin{center}
\includegraphics[width=4cm]{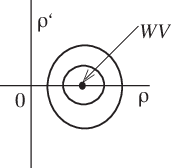}
\end{center}
\caption{Phase portrait for $K\neq0$ in the ($\rho,\rho'$) plane.}
\label{Fig4}
\end{figure}
\end{itemize}
The argument $\theta(y)$ is of the form $\theta (y)=\beta y+\phi (y)$ for some real $\beta$, where
$\phi(y)$ has the same period as $\rho(y)$.
These solutions look more exotic. In particular, they are quasi-periodic in $y$. In the original coordinates, they correspond to traveling waves whose principal part has the following form, up to a shift in $z$ and forgetting the dependency in $x$:%
\[
A(\widetilde{y})e^{i\alpha _{c}z}+c.c.,
\]%
where
\begin{align*}
A(\widetilde{y}) &= \rho(\widetilde{y})e^{i[\beta \widetilde{y}+\phi(\widetilde{y})]}, \\
\widetilde{y} &= \frac{y}{\mathfrak{R}}+b_1 t,
\end{align*}
with $\rho >0$ periodic and $\phi $ having the same period. Regarding the persistence of such solutions under perturbations, we observe that, for a suitable $\beta$, the function $A(\widetilde{y})e^{-i\beta \widetilde{y}}$ is a periodic solution of Ginzburg-Landau equation. Solutions of this type persist for the full Navier-Stokes system (\ref{nl1},\ref{nl2},\ref{nl3}) by implicit function theorem arguments; see \cite{Haragus}. This corresponds to new quasi-periodic traveling rotating waves, which bifurcate both for $\tau>0$ and for $\tau<0$.
\end{proof}


\section{Study near $\mu=\mu_c$ \label{GL2}\label{G-L equ2}}


We now examine the regime where $\mu$ is close to $\mu_c$ and $T$ is close to $T_c(\mu)$. 
Recall that the coefficient $a_4$ changes sign at $\mu_c$, while $c$ remains negative throughout.

In the vicinity of $\mu_c$ and $T_c(\mu_c)$, the critical eigenvalue $\lambda_0$ admits the Taylor expansion
\begin{equation} \label{eq:eigenvalue-expansion}
\lambda_0(\tau, \mathfrak{B}) = i b_1 \mathfrak{B} + a_3 \tau + a_4 \mathfrak{B}^2 - a_5 \mathfrak{B}^4 + \mathcal{O}(|\tau|^2 + |\mathfrak{B}|^5),
\end{equation}
where $b_1, a_3, a_4, a_5 \in \mathbb{R}$, and where we introduce the notation
\begin{align}
\tau &= T - T_c, \label{eq:tau-def}\\
a_4 &= -\sigma, \label{eq:a4-sigma}\\
\sigma &= \sigma_0(\mu - \mu_c) + \mathcal{O}(|\mu - \mu_c|^2). \label{eq:sigma-expansion}
\end{align}
Equation \eqref{eq:a4-sigma} defines the parameter $\sigma$, whose expansion near $\mu_c$ is given by \eqref{eq:sigma-expansion}.
Numerical computations yield the following sign conditions:
\begin{equation} \label{eq:sign-conditions}
a_3 > 0,\qquad a_5 > 0,\qquad b_1 \neq 0,\qquad \sigma_0 > 0.
\end{equation}
These conditions imply that in a neighborhood of the origin, the real part of $\lambda_0(\tau, 0)$ attains its minimum at $\tau = 0$. 
Furthermore, as a function of $\mathfrak{B}$, the real part of $\lambda_0(\tau, \mathfrak{B})$ exhibits a curvature that changes sign as $\mu$ varies near $\mu_c$. This sign change is governed by \eqref{eq:sigma-expansion} via relation \eqref{eq:a4-sigma}.

\begin{remark}
The problem involves two independent small parameters: $\tau = T - T_c$, which measures the deviation of the Taylor number from criticality, and $\sigma$, which is linearly related to $\mu - \mu_c$ through \eqref{eq:sigma-expansion}.
\end{remark}

The dynamics of perturbations near the Couette flow are governed by the Ginzburg-Landau equation
\begin{equation} \label{eq:basic-GL}
\partial_t A = a_3 \tau A + b_1 \mathfrak{R} \partial_y A + \sigma \mathfrak{R}^2 \partial_y^2 A - a_5 \mathfrak{R}^4 \partial_y^4 A - c A |A|^2,
\end{equation}
which depends on the two small parameters $\tau$ and $\sigma$ defined above. 
We emphasize that \eqref{eq:basic-GL} is a fourth-order equation, in contrast to the second-order equation derived in Section~\ref{G-L equ1}.

We perform the change of variables
\begin{align} \label{eq:change-variables}
\widetilde{y} &= \frac{y}{\mathfrak{R}} + b_1 t, \\
\widetilde{A}(\widetilde{y}, t) &= A(y, t).
\end{align}
Substituting \eqref{eq:change-variables} into \eqref{eq:basic-GL} and dropping the tildes for convenience yields the reduced equation
\begin{equation} \label{eq:reduced-GL}
\partial_t A = a_3 \tau A + \sigma \partial_y^2 A - a_5 \partial_y^4 A - c A |A|^2.
\end{equation}
Our primary interest lies in the steady-state solutions of \eqref{eq:reduced-GL}, i.e., functions $A(y)$ satisfying the ordinary differential equation
\begin{equation} \label{eq:steady-GL}
\partial_y^4 A = \tau A + \sigma \partial_y^2 A - c A |A|^2.
\end{equation}
Equation \eqref{eq:steady-GL} possesses several important symmetries:
\begin{itemize}
    \item Phase invariance: $A \rightarrow A e^{i h}$ for any constant $h \in \mathbb{R}$;
    \item Complex conjugation: $A \rightarrow \overline{A}$;
    \item Reversibility: $(y, A(y)) \rightarrow (-y, A(-y))$.
\end{itemize}

At the linear level, the operator $L_{\tau,\sigma}$ associated with \eqref{eq:steady-GL} has a zero eigenvalue of multiplicity eight, consisting of two identical $4 \times 4$ Jordan blocks. 
This spectral structure will be analyzed in detail below.

The objective of the following sections is to classify and characterize the steady solutions of \eqref{eq:steady-GL}. 
From a physical perspective, such solutions correspond to traveling waves that propagate at constant velocity along the azimuthal direction while remaining periodic in the axial direction. They therefore represent coherent structures near the Couette flow.

\begin{theorem}\label{thm:case2}
    For the fixed axial wave number $\alpha_c$, and for $\mu$ close to $\mu_c$ (i.e. $\sigma: = - a_4$ close to 0), we examine three different cases ,where $\tau=T-T_c$ is also close to $0$:
   
        Case 1: $\sigma>0,\qquad \tau=\varepsilon \sigma^2,\qquad|\varepsilon|\ll 1$;
        
        Case 2: $\sigma<0,\qquad \tau=\varepsilon \sigma^2,\qquad |\varepsilon|\ll 1$;

        Case 3: $\sigma<0,\qquad \sigma^2+4\tau=\varepsilon \sigma^2,\qquad |\varepsilon|\ll 1$.
\begin{itemize}
\item Case 1 extends the situation treated in Theorem \ref{thm:case1} to the regime where $\sigma$ is positive and close to $0$.
\item In Case 2, there are new types of traveling rotating waves, which can be viewed as flows on a two-parameter $(H,K)$ family of two-dimensional tori. The flow is quasi-periodic only for $(H,K)$ in a region that is locally the product of a line with a Cantor set. Moreover, there exists a one parameter family of ``tubes" of heteroclinic solutions connecting a periodic solution to itself, up to a shift by half the axial period. These solutions are asymptotically close to TVF in the azimuthal direction. They exist until the diameter of the limiting periodic solutions in the azimuthal direction becomes exponentially small as $K\rightarrow 0$.
\item In Case 3, in addition to the subcritical Taylor vortices, there exist traveling rotating wavy vortices and quasi-periodic flows of the same type as in Case 2. Moreover, for $\varepsilon<0$, there exist at least two different homoclinic solutions connecting the Couette flow to itself. These solutions exhibit damped oscillations at infinity in the azimuthal direction and localized traveling perturbations while remaining periodic in the axial direction.
\end{itemize}  
\end{theorem}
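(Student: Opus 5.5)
The plan is to study the fourth-order reversible ODE \eqref{eq:steady-GL} by rescaling with $\sigma$. We then use its integrable structure (phase invariance gives a conserved ``angular momentum'' $K$, and the equation is Hamiltonian) together with normal-form and KAM arguments. Finally we invoke the spatial-dynamics persistence results of \cite{Haragus,Iooss2} to transfer the conclusions to (\ref{nl1},\ref{nl2},\ref{nl3}).

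\textbf{Rescaling and first integrals.} Set $y = s/\sqrt{|\sigma|}$ and $A = |\sigma|\,|c|^{-1/2} B$. Then \eqref{eq:steady-GL} becomes
\[
B'''' = \varepsilon' B + \mathrm{sgn}(\sigma) B'' + B|B|^2,
\]
with $\varepsilon' = \tau/\sigma^2$, since $c<0$ throughout. The equation derives from the Lagrangian
\[
\mathcal L = |B''|^2 + \mathrm{sgn}(\sigma)|B'|^2 - \varepsilon'|B|^2 - \tfrac12|B|^4 ,
\]
so it is Hamiltonian on $\mathbb{C}^4\simeq\mathbb{R}^8$. It has the energy $H$ and, by phase invariance and Noether's theorem, the momentum
\[
K = \mathrm{Im}\bigl(\overline{B}B''' - \overline{B'}B'' - \mathrm{sgn}(\sigma)\overline{B}B'\bigr).
\]
Fixing $(H,K)$ and quotienting by the phase reduces the dynamics to a two-degree-of-freedom system. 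When $K=0$ we may take $B$ real, and the real equation $b''''=\varepsilon' b + \mathrm{sgn}(\sigma) b'' + b^3$ is a reversible Hamiltonian system in $\mathbb{R}^4$.

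\textbf{Case analysis.} Linearizing at $0$ gives $\lambda^4 - \mathrm{sgn}(\sigma)\lambda^2 - \varepsilon' = 0$.

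In Case 1 ($\sigma>0$) there are two large eigenvalues $\pm\approx1$ and two small ones $\lambda^2\approx-\varepsilon'$. A center-manifold reduction onto the small modes yields, at leading order in $\varepsilon'$, the second-order equation $-B'' = \varepsilon' B + B|B|^2$. This is \eqref{timeGL} with $b_4>0$, $c<0$, so Theorem \ref{thm:case1} applies directly.

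In Case 2 ($\sigma<0$) the roots are $\lambda^2\approx -1$ and $\lambda^2\approx \varepsilon'$. Near $0$ there is a pair $\pm i$, so the system is oscillatory. We pass to an amplitude (normal-form) equation for the periodic modes $e^{\pm i s}$; averaging to the needed order turns the reduced system into an integrable one with actions $(H,K)$, whose level sets are 2-tori. Moser's twist / KAM theorem for reversible systems (with a nondegeneracy of the frequency map checked from the normal-form coefficients) then gives quasi-periodic tori for a Cantor set of frequency ratios. Combined with the free parameter, this yields the ``line $\times$ Cantor set'' description.

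The heteroclinic tubes are handled next. Near the TVF-type equilibria, i.e. the real constants $b^2=-\varepsilon'$, the linearization of the truncated normal form has a saddle structure. That truncated system has a heteroclinic connecting a periodic orbit to its image under $B\to -B$, which is the half-axial-period shift. We continue it by a reversibility / transversality argument: a reversible orbit meets the fixed plane. The connection survives as long as the exponentially small splitting (beyond all orders) does not dominate, which explains why existence is only claimed while the diameter of the limiting periodic orbit stays above an exponentially small threshold as $K\to0$.

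In Case 3 we have $\varepsilon' = -1/4 + \varepsilon/4$, and the eigenvalues collide pairwise at $\lambda = \pm i/\sqrt2$ (a 1:1 reversible resonance). For $\varepsilon<0$ they split into a quartet $\pm a\pm i b$. We use the normal form of the reversible 1:1 resonance (Iooss–Pérouème), extended to include the $S^1$ phase symmetry. The normal form is integrable and has a homoclinic orbit to $0$ with oscillatory tails, because the cubic coefficient carries the favorable sign when $c<0$. Reversibility and the symmetry $B\to\overline{B}$ then give persistence of at least two distinct reversible homoclinics, namely the two symmetric ones with phase shift $0$ or $\pi$ in the oscillation; this is the Iooss–Pérouème argument. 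Wavy vortices and TVF come from the equilibria and periodic orbits of the same normal form, and the quasi-periodic flows come from KAM as in Case 2.

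\textbf{Main obstacle.} The delicate step is Case 2: making rigorous the heteroclinic tubes and the exponentially small limit. This requires an exponential-asymptotics or Lazutkin-type splitting estimate, which I would first try to bypass via the reversibility (fixed-plane intersection) argument, not a Melnikov computation.

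Finally, every solution obtained for \eqref{eq:steady-GL} persists for (\ref{nl1},\ref{nl2},\ref{nl3}) by the center-manifold reduction of the spatial-dynamics formulation \cite{Haragus,Iooss2}. This step is routine once the reduced objects are transversal or KAM-nondegenerate.
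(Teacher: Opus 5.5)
Your case-by-case strategy is the paper's. You rescale by $\sigma$, reduce Case~1 to a reversible Takens--Bogdanov system on a center manifold, and treat Case~2 with the reversible $0^{2+}(i\omega)$ normal form plus KAM (Iooss--Kirchg\"assner) and Lombardi's exponentially-small analysis. Case~3 goes through the reversible $1{:}1$ resonance and the Iooss--P\'erou\`eme persistence of two reversible homoclinics. Your complex-$B$ Case~1 is what the paper's remark after the Takens--Bogdanov system alludes to.

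The genuine problem is the Hamiltonian/Noether layer you put on top, which misidentifies $K$.
\begin{itemize}
\item \emph{$K=0$ does not give real $B$.} This holds for the second-order equation only because there $K=\rho^2\theta'$. It fails for this fourth-order one: in Case~3 with $\varepsilon<0$, $B=\frac{\sqrt{-\varepsilon}}{2}e^{is/\sqrt2}$ is an exact solution with $K=0$ and nonconstant phase.
\item \emph{The phase reduction is miscounted.} Reducing the $8$-dimensional ($4$ degrees of freedom) system by the phase at fixed $K$ leaves $3$ degrees of freedom, not $2$. Level sets of $(H,K)$ in the reduced space are therefore four-dimensional, not $2$-tori.
\item \emph{Decisively, your $K$ vanishes where the theorem lives.} On the constant-phase (real) subspace, which contains the theorem's tori, tubes and homoclinics, your Noether charge is identically zero. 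So it can be neither the second parameter of the torus family nor the quantity whose limit $K\to0$ shrinks the tubes.
\end{itemize}

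The paper avoids this by restricting to real $A$ from the start (an invariant subspace) and working with the $4$-dimensional reversible system $dX/dy=L_{\tau,\sigma}X+C(X,X,X)$. There the $K$ of Case~2 is $|W|^2$, the squared amplitude of the oscillatory mode in the normal form $U'=V$, $V'=\varepsilon U-cU^3-6cU|W|^2$, $W'=i(1+\varepsilon/2)W-\frac{3ic}{2}W(U^2+|W|^2)$. In Case~3 it is $\frac{i}{2}(U\overline V-\overline UV)$. Both are integrals only of the \emph{truncated} normal form. That is exactly why KAM, Lombardi and Iooss--P\'erou\`eme are needed, and why $K\to0$ means the periodic orbit around the TVF shrinks. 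With this correction your plan coincides with the paper's.

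Three smaller points need fixing.
\begin{itemize}
\item In Case~3 the TVF ($b^2=\frac14-\frac{\varepsilon}{4}$ in your scaling) are $O(1)$. They are not equilibria of the local normal form, only the explicit constant solutions.
\item A reversibility/implicit-function argument alone gives the tubes only for $K$ bounded below. Reaching exponentially small diameters needs Lombardi's analysis, so your proposed ``bypass'' cannot deliver that part of the statement.
\item ``Every solution persists'' overclaims. The $K=0$ heteroclinic is not known to persist, and only two homoclinics of the circle do.
\end{itemize}
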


We begin by examining the linearized system associated with \eqref{eq:steady-GL}:
\begin{equation} \label{eq:linearized}
\partial_y^4 A = \tau A + \sigma \partial_y^2 A.
\end{equation}
Equation \eqref{eq:linearized} is a fourth-order complex ordinary differential equation. 
It can be recast as a first-order system of eight real ODEs. This system decouples into two identical first-order systems of four equations, one for the real part of $A$ and the other for its imaginary part. 
The corresponding matrix for each $4\times 4$ subsystem is
\begin{equation} \label{eq:L-matrix}
L_{\tau,\sigma} = \begin{pmatrix}
0 & 1 & 0 & 0 \\
0 & 0 & 1 & 0 \\
0 & 0 & 0 & 1 \\
\tau & 0 & \sigma & 0
\end{pmatrix}.
\end{equation}
The eigenvalues $\lambda$ of $L_{\tau,\sigma}$ satisfy the characteristic equation
\begin{equation} \label{eq:characteristic}
\lambda^4 - \sigma \lambda^2 - \tau = 0.
\end{equation}
Consequently, the matrix of the linearized system \eqref{eq:linearized} possesses four double eigenvalues $\lambda$, corresponding to eigenfunctions proportional to $e^{\pm i\alpha_c z}$.

The discriminant of \eqref{eq:characteristic} is
\begin{equation} \label{eq:discriminant}
\Delta = \sigma^2 + 4\tau,
\end{equation}
which vanishes on the parabola $\tau = -\sigma^2/4$, illustrated in Figure~\ref{fig:eigenvalues}. 
On this parabola, the eigenvalues of $L_{\tau,\sigma}$ are double; at the origin $(\tau,\sigma) = (0,0)$, they are even quadruply degenerate. 
The distribution of eigenvalues in the $(\tau,\sigma)$ parameter plane is depicted in Figure~\ref{fig:eigenvalues}.

\begin{figure}[h]
\centering
\includegraphics[width=6cm]{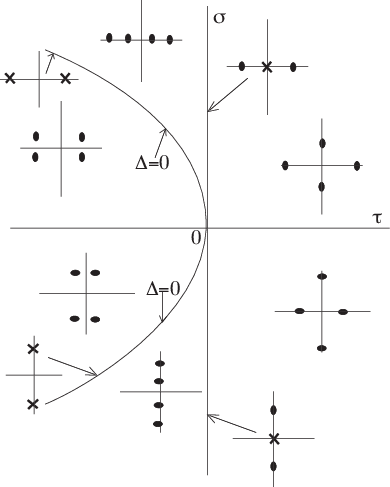}
\caption{Location of roots $\lambda$ of $\lambda^4 - \sigma \lambda^2 - \tau = 0$: dots indicate simple roots, crosses indicate double roots.}
\label{fig:eigenvalues}
\end{figure}

\medskip

In what follows, we restrict our attention to solutions where the amplitude $A$ possesses a constant phase. 
Under this assumption, we may take $A$ to be real without loss of generality. 
The system then reduces to a four-dimensional real ODE of the form
\begin{equation} \label{eq:reduced-system}
\frac{dX}{dy} = L_{\tau,\sigma} X + C(X,X,X),
\end{equation}
where $X = (x_1, x_2, x_3, x_4)^T \in \mathbb{R}^4$, and $C$ is the symmetric trilinear operator defined by
\begin{equation} \label{eq:cubic-operator}
C(X,Y,Z) = \begin{pmatrix}
0 \\ 0 \\ 0 \\ -c\, x_1 y_1 z_1
\end{pmatrix}.
\end{equation}
The system \eqref{eq:reduced-system} is reversible under the symmetry $S$ defined by
\begin{equation} \label{eq:reversibility}
S X = (x_1, -x_2, x_3, -x_4)^T.
\end{equation}
With this formulation, we have arrived at a setting that falls within the framework developed in \cite{Io95}. 
Appropriate scalings, as detailed therein, permit a reduction to normal forms that have been systematically studied in \cite{Haragus}. 
The analysis of \eqref{eq:reduced-system} will therefore proceed along the lines established in these references, adapted to the specific structure of the nonlinearity \eqref{eq:cubic-operator} and the reversibility symmetry \eqref{eq:reversibility}.


\subsection{Study in the region $\protect\tau =\protect\varepsilon \protect%
\sigma ^{2},|\protect\varepsilon |\ll 1,$ near the line $\protect\tau =0$}

The corresponding region in the parameter plane $(\tau ,\sigma )$ is the
union of two symmetric horn shaped areas centered on the $\sigma $ axis; see
Figure \ref{region1}.

\begin{figure}[h]
\begin{center}
\includegraphics[width=3cm]{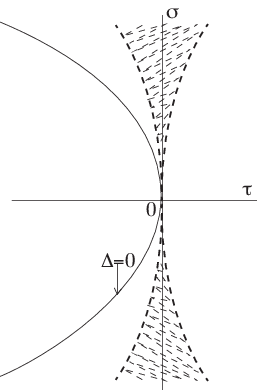}
\end{center}
\caption{Region of study $\protect\tau=\protect\varepsilon \protect\sigma^2,|%
\protect\varepsilon|\ll 1$.}
\label{region1}
\end{figure}
When $\tau =0$, the eigenvalues of $L_{0,\sigma }$ are such that%
\begin{eqnarray*}
\lambda =0\text{,} \qquad
\lambda ^{2} =\sigma .
\end{eqnarray*}%
The Jordan block for the double eigenvalue $0$ is 
\[
\left( 
\begin{array}{cc}
0 & 1 \\ 
0 & 0%
\end{array}%
\right) 
\]%
associated with the eigenvector $\xi _{0}$ and the generalized eigenvector $\xi _{1}$
defined by%
\begin{equation}\label{eigenvector}
\xi _{0}=\left( 
\begin{array}{c}
1 \\ 
0 \\ 
0 \\ 
0%
\end{array}%
\right), \qquad \xi _{1}=\left( 
\begin{array}{c}
0 \\ 
1 \\ 
0 \\ 
0%
\end{array}%
\right) , 
\end{equation}
which satisfy%
\[
S\xi _{0}=\xi _{0}, \qquad S\xi _{1}=-\xi _{1}. 
\]%
The two other eigenvalues are 
\[
\lambda _{\pm }=\pm \sqrt{\sigma } .
\]%
They are real and have opposite signs when $\sigma >0,$ and they are purely imaginary complex conjugates when $\sigma <0.$ Let us make the following scaling:%
\begin{equation}
x_{1}=\sigma \widetilde{x_{1}}, \qquad y=|\sigma |^{-1/2}\widetilde{y},
\label{scaling 1}
\end{equation}%
then dropping the tildes, (\ref{eq:reduced-system}) becomes%
\begin{equation}
\frac{d^{4}x_{1}}{dy^{4}}=\varepsilon x_{1}+\mathrm{sgn}\,\sigma \frac{%
d^{2}x_{1}}{dy^{2}}-cx_{1}^{3}.  \label{reducedsyst 1}
\end{equation}

\subsubsection{Case $\protect\sigma >0,\protect\tau =\protect\varepsilon 
\protect\sigma ^{2},0\leq |\protect\varepsilon |\ll 1.$}

Now we study the system%
\[
\frac{d^{4}x_{1}}{dy^{4}}=\varepsilon x_{1}+\frac{d^{2}x_{1}}{dy^{2}}%
-cx_{1}^{3}, 
\]
where $c<0$. 
This case corresponds to the two-dimensional study of a ``reversible
Takens-Bogdanov bifurcation $0^{2+}$". It also occurs in several physical
problems; see Section 4.1 of the book \cite{Haragus}. After the scaling, the
eigenvalues satisfy%
\[
\lambda ^{4}-\lambda ^{2}-\varepsilon =0.
\]%
For $\varepsilon =0,$ the eigenvector $\xi _{0}$ and generalized
eigenvector $\xi _{1}$ belonging to the double eigenvalue $0$ are as defined
above. The two other eigenvalues for $\varepsilon =0$ are 
\[
\lambda _{\pm }=\pm 1,
\]%
and hence not close to $0.$ It results, by a classical center manifold argument
and using the reversibility symmetry, that the dynamics in the neighborhood
of the origin are governed by a 2-dimensional system of the form (see Section
4.1 in \cite{Haragus})%
\begin{eqnarray*}
\frac{dU}{dy} &=&V, \\
\frac{dV}{dy} &=&\phi (\varepsilon ,U).
\end{eqnarray*}%
The 2-dimensional center manifold reads as%
\[
X=U\xi _{0}+V\xi _{1}+\Phi (\varepsilon ,U,V),
\]%
where%
\[
S\Phi (\varepsilon ,U,V)=\Phi (\varepsilon ,U,-V).
\]%
Since, for $\varepsilon \neq 0$ the critical eigenvalues $\lambda $ are such
that $\lambda ^{2}=-\varepsilon +\varepsilon ^{2}+O(\varepsilon ^{3}),$ we
deduce that the linear part in $U$ of $\phi $ is%
\[
\phi _{lin}(\varepsilon ,U)=(-\varepsilon +\varepsilon ^{2}+O(\varepsilon
^{3}))U.
\]%
Moreover, the function $\phi $ has no quadratic term in $U$, since the system (%
\ref{reducedsyst 1}) is odd, and the system on the center manifold preserves
this oddness. It is easy to check that the principal part of the reversible
two-dimensional system on the center manifold is now
\begin{eqnarray}
\frac{dU}{dy} &=&V , \label{Takens-Bogdanov} \\
\frac{dV}{dy} &=&-\varepsilon U+cU^{3}.  \nonumber
\end{eqnarray}%
This gives phase portraits as in Figure \ref{Fig2}, with $\tau$ replaced by $\varepsilon$. Thus, this case is a natural extension of the situation studied in Section \ref{G-L equ1}, when $\mu$ approaches $\mu_c$ from above ($\sigma>0$).

\begin{remark}
In this situation, we would be able to study the case where the phase of $A$ is
not fixed. This leads to a quadruple zero eigenvalue, while the other four eigenvalues remain far from the imaginary axis. This is exactly what is done
in Section \ref{G-L equ1}. As a result, one obtains many more exotic solutions than those of (\ref{Takens-Bogdanov}), as expressed in Theorem \ref{thm:case1}.
\end{remark}

\subsubsection{Case $\protect\sigma <0,\protect\tau =\protect\varepsilon 
\protect\sigma ^{2},0\leq |\protect\varepsilon |\ll 1.$}

Now we study the system%
\[
\frac{d^{4}x_{1}}{dy^{4}}=\varepsilon x_{1}-\frac{d^{2}x_{1}}{dy^{2}}%
-cx_{1}^{3}, 
\]
where $c<0$. 
For $\varepsilon =0,$ the eigenvalues are $0$ and $\pm i$. The eigenvectors $\xi
_{0}$ and $\xi _{1}$, associated with the double zero eigenvalue, are defined as in \eqref{eigenvector}.
The eigenvectors $\zeta _{0}$ and $\overline{\zeta _{0}}$ associated with
eigenvalues $\pm i$ are defined by%
\[
\zeta _{0}=\left( 
\begin{array}{c}
1 \\ 
i \\ 
-1 \\ 
-i%
\end{array}%
\right) . 
\]
This case corresponds to the four-dimensional study of a ``reversible $%
0^{2+}(i\omega )$ bifurcation". It occurs in several physical problems; see
Section 4.3.1 of \cite{Haragus}. In this case, the center manifold
reduction is not applicable, and one must work directly with a four-dimensional system, which can be rewritten in ``normal form"; see Section 4.3.1 of \cite{Haragus}.
This means that we can define new coordinates through the nonlinear, close-to-identity change of coordinates
\[
X=U\xi _{0}+V\xi _{1}+W\zeta _{0}+\overline{W}\overline{\zeta _{0}}+\Phi
(\varepsilon ,U,V,W,\overline{W}),
\]%
such that%
\begin{eqnarray*}
S(U,V,W,\overline{W})^{t} &=&(U,-V,\overline{W},W)^{t}, \\
S\Phi (\varepsilon ,U,V,W,\overline{W}) &=&\Phi (\varepsilon ,U,-V,\overline{%
W},W),
\end{eqnarray*}%
and such that the principal part of the system in $\mathbb{R}^{2}\times 
\mathbb{C}
$ takes the form (see Section 4.3.1 of the book \cite{Haragus} for indications on
its computation) 
\begin{eqnarray}
\frac{dU}{dy} &=&V  \nonumber \\
\frac{dV}{dy} &=&\varepsilon U-cU^{3}-6cU|W|^{2}  \label{normalform1} \\
\frac{dW}{dy} &=&i(1+\frac{\varepsilon }{2})W-\frac{3ic}{2}W(U^{2}+|W|^{2}).
\nonumber
\end{eqnarray}%
The coefficients of the normal form are computed by the standard way.
First integrals of system (\ref{normalform1}) are 
\begin{eqnarray*}
K &=&|W|^{2}, \\
H &=&V^{2}-\varepsilon U^{2}+\frac{c}{2}U^{4}+6cKU^{2}.
\end{eqnarray*}%
Indeed, from \eqref{normalform1}, we have $W' = iRW$, where $R$ is a real function. Therefore,
$$
K' = W' \overline{W} + W \overline{W}' = i RW \overline{W} - iWR\overline{W} = 0.
$$
Moreover $H' = 0$ follows from direct computations.
It follows that the four-dimensional phase portrait has the following structure: 
the projections of the solutions onto the $W$ plane are circles, while their projections onto the $(U,V)$ plane are described by 
\[
V^{2}=F_{H,K}(U)\overset{def}{=}(\varepsilon -6cK)U^{2}-\frac{c}{2}U^{4}+H.
\]%
as indicated in Figure \ref{region1a}. 

\begin{figure}[h]
\begin{center}
\includegraphics[width=8cm]{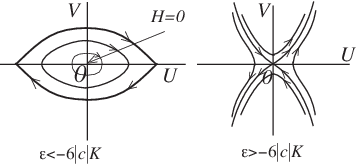}
\end{center}
\caption{Phase portraits given by $V^2=f_{H,K}(U)$. No small bounded solutions for $\varepsilon>-6|c|K$. For $\varepsilon<-6|c|K$ the two non $0$ equilibria are TVF for $K=0$.}
\label{region1a}
\end{figure}

We first observe that, for $\varepsilon>-6|c|K$, the only relevant solution in the $(U,V)$-plane is the equilibrium $U=V=0$. Through the oscillation of $W(y)$, this corresponds to periodic solutions of the first kind that remain close to the Couette flow.

For $\varepsilon <-6|c|K$, the two equilibria $$U=\pm \left|\frac{\varepsilon -6cK}{c}%
\right|^{1/2}$$ in the $(U,V)$ plane correspond to a one-parameter family of
periodic solutions $W_{K}(y)$ of a first kind. When $K=0$, this family reduces to the TVF flow, which is independent of $y$, and bifurcates subcritically. The heteroclinic orbits in the $%
(U,V)$ plane give solutions connecting one equilibrium to the other. In the full four-dimensional phase space, this gives a one-parameter family of two-dimensional heteroclinic tubes, with varying diameters due to $W(y)$, connecting a periodic solution of the first kind to the other. These two periodic solutions represent the same equilibrium, shifted in the $z$-coordinate by half of the axial period.

The closed curves in the $(U,V)$ plane correspond to solutions running on
two-dimensional tori in four-dimensional space. In general, these solutions are quasi-periodic.

Let us now discuss whether these solutions persist for the full limiting Navier-Stokes equations (\ref{nl1},\ref{nl2},\ref{nl3}), beyond the cubic normal form \eqref{normalform1}.
A general result (see Section 4.3.1 of \cite{Haragus}) states that the periodic solutions of the first kind persist for the full system. It is proved that, for $\varepsilon<-6|c|K$, the tubes of heteroclinic orbits persist provided their diameter, that is, $K$, is not too small. In fact, the same proof in \cite{Lomb} shows that the above tubes persist until their diameter becomes exponentially small as $K\rightarrow 0$. By contrast, the heteroclinic orbit to the TVF solution obtained from the normal form when $K=0$ has not been proved to persist.

For the solutions lying on invariant tori, the quasi-periodic solutions persist for $(H,K)$ lying in a region of the $(H,K)$-plane that is locally the product of a line with a Cantor set. This type of result goes back to a proof given in \cite{Io-Ki} for the same normal form. In particular, this gives quasi-periodic bifurcating solutions close to Couette flow for $\tau <0$.

\subsection{Study in the region $\protect\sigma ^{2}+4\protect\tau \ll \protect%
\sigma ^{2}$ and $\sigma < 0$ (near the bottom curve $\Delta =0$)}

The corresponding region in the parameter plane $(\tau ,\sigma )$ is a
curved horn shaped area centered on the parabola given by $\Delta =0$; see Figure \ref{region2}. 
\begin{figure}[h]
\begin{center}
\includegraphics[width=4cm]{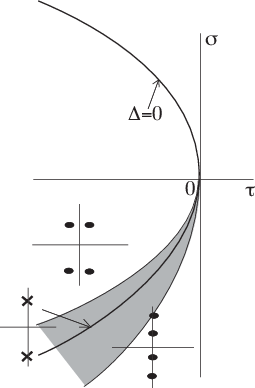}
\end{center}
\caption{Region $\sigma^2+4\tau \ll \sigma^2$ and $\sigma < 0$.}
\label{region2}
\end{figure}
Let us set 
\begin{equation*}
\sigma ^{2}+4\tau =\varepsilon \sigma ^{2},
\end{equation*}%
with $|\varepsilon |\ll 1,$ and make the scaling%
\begin{equation*}
x_{1}=\sigma ^{2}\widetilde{x_{1}},\quad y=|\sigma |^{-1/2}\widetilde{y}.
\end{equation*}%
Consequently, 
$$
x_{2}=\sigma ^{2}|\sigma|^{1/2}\widetilde{x_{2}},\quad x_{3}=\sigma ^{2}|\sigma|\widetilde{x_{3}},\quad x_{4}=\sigma ^{2}|\sigma|^{3/2}\widetilde{x_{4}}.
$$
Dropping the tildes, (\ref{eq:reduced-GL}) becomes%
\begin{equation}
\frac{dX}{dy}=L^{(0)}X+\varepsilon L^{(1)}X+\sigma ^{2}C(X,X,X),
\label{reducedsyst 2}
\end{equation}%
with 
\begin{equation*}
X=(x_{1},x_{2},x_{3},x_{4})^{t},
\end{equation*}%
\begin{equation*}
L^{(0)}=\left( 
\begin{array}{cccc}
0 & 1 & 0 & 0 \\ 
0 & 0 & 1 & 0 \\ 
0 & 0 & 0 & 1 \\ 
-1/4 & 0 & -1 & 0%
\end{array}%
\right) ,
\qquad 
L^{(1)}=\left( 
\begin{array}{cccc}
0 & 0 & 0 & 0 \\ 
0 & 0 & 0 & 0 \\ 
0 & 0 & 0 & 0 \\ 
1/4 & 0 & 0 & 0%
\end{array}%
\right) ,
\end{equation*}%
and $C$ defined by%
\begin{equation*}
C(X,Y,Z)=\left( 
\begin{array}{c}
0 \\ 
0 \\ 
0 \\ 
-cx_{1}y_{1}z_{1}%
\end{array}%
\right),
\end{equation*}%
where $c<0$.
The eigenvalues $\lambda $ of the linear operator satisfy
\begin{equation*}
(\lambda ^{2}+\frac{1}{2})^{2}-\frac{\varepsilon }{4}=0.
\end{equation*}%
For $\varepsilon=0$, the system has a pair of double imaginary eigenvalues $\pm 
 i/\sqrt{2}$. For $\varepsilon >0$ we have four symmetric
eigenvalues on the imaginary axis, whereas for $\varepsilon<0$, we have two
pairs of eigenvalues symmetric with respect to real and imaginary axis; see
Figures \ref{fig:eigenvalues} and \ref{region2}. This case corresponds to the four-dimensional study of a ``reversible $(i\omega )^{2}$ bifurcation (1-1 resonance)", which occurs in several physical problems; see Section 4.3.3 of the book \cite{Haragus}.

The eigenvectors $\zeta_{0}$ and $\overline{\zeta_{0}}$, together with the generalized eigenvectors $\zeta_{1}$ and $\overline{\zeta_{1}}$ are defined as follows. For the eigenvalue $ i/\sqrt{2}$, we have,
\begin{equation*}
\zeta _{0}=\left( 
\begin{array}{c}
1 \\ 
\frac{i}{\sqrt{2}} \\ 
-\frac{1}{2} \\ 
-\frac{i}{2\sqrt{2}}%
\end{array}%
\right) ,\text{   }\zeta _{1}=\left( 
\begin{array}{c}
0 \\ 
1 \\ 
i\sqrt{2} \\ 
-\frac{3}{2}%
\end{array}%
\right) ,
\end{equation*}%
which satisfy%
\begin{eqnarray*}
(L_{0}-i/\sqrt{2})\zeta _{0} &=&0,\text{   }(L_{0}-i/\sqrt{2})\zeta _{1}=\zeta _{0}, \\
S\zeta _{0} &=&\overline{\zeta _{0}},\text{   }S\zeta _{1}=-\overline{\zeta _{1}}.
\end{eqnarray*}%
As it is now classical (see section 4.3.3 of 
\cite{Haragus}), we can define a nonlinear change of coordinates such that%
\begin{equation*}
X=U\zeta _{0}+V\zeta _{1}+\overline{U}\overline{\zeta _{0}}+\overline{V}%
\overline{\zeta _{1}}+\Phi (\varepsilon ,U,V,\overline{U},\overline{V}),
\end{equation*}%
where $\Phi $ is smooth with a Taylor expansion of the form%
\begin{equation*}
\Phi (\varepsilon ,U,V,\overline{U},\overline{V})=\sum_{m+p+q+r+s\geq
2}\varepsilon ^{m}U^{p}\overline{U}^{q}V^{r}\overline{V}^{s}\Phi
_{pqrs}^{(m)}
\end{equation*}%
in such a way that the system satisfied by $(U,V)$ is in normal form. The
principal part of the normal form is:%
\begin{equation}\label{normal form2}
    \begin{aligned}
        \frac{dU}{dy} &=\frac{i}{\sqrt{2}}(1-\frac{\varepsilon }{8})U+V+iU[\beta
|U|^{2}+i\frac{\gamma }{2}(U\overline{V}-\overline{U}V)]  \\
\frac{dV}{dy} &=\frac{i}{\sqrt{2}}(1-\frac{\varepsilon }{8})V+iV[\beta
|U|^{2}+i\frac{\gamma }{2}(U\overline{V}-\overline{U}V)]\\
&+U[-\frac{%
\varepsilon }{8}+\frac{3}{2}\sigma ^{2}c|U|^{2}+i\frac{d}{2}(U\overline{V}-%
\overline{U}V)],
    \end{aligned}
\end{equation}
where $d=c\sigma^2/(2\sqrt{2})$,  and the coefficients $\beta$ and $\gamma$ are proportional to $\sigma^{2}c$. They can be computed easily by following the procedure described on pp. 315--319 of \cite{Haragus}. The most important one is the negative
coefficient of $U|U|^{2}$ in the second equation. Since this system is integrable,
 we know all its small bounded solutions for $\varepsilon $ close to $0.$

First integrals are 
\begin{eqnarray*}
K &=&\frac{i}{2}(U\overline{V}-\overline{U}V), \\
H &=&|V|^{2}+(\frac{\varepsilon }{8}-dK)|U|^{2}-\frac{3}{4}\sigma
^{2}c|U|^{4},
\end{eqnarray*}%
and defining $r_{0},r_{1},\theta _{0},\theta _{1}$ by%
\begin{equation*}
U=r_{0}e^{i(\frac{y}{\sqrt{2}}+\theta _{0})},\text{   }V=r_{1}e^{i(\frac{y}{\sqrt{2}}%
+\theta _{1})},
\end{equation*}%
we obtain%
\begin{eqnarray*}
K &=&r_{0}r_{1}\sin (\theta _{1}-\theta _{0}), \\
H &=&r_{1}^{2}+(\frac{\varepsilon }{8}-dK)r_{0}^{2}-\frac{3}{4}\sigma
^{2}cr_{0}^{4}.
\end{eqnarray*}%
Now we set%
\begin{equation*}
u_{0}=r_{0}^{2},\quad u_{1}=r_{1}^{2}.
\end{equation*}%
From \eqref{normal form2}, we know that $U' = V + iGU$, where $G$ is a real function.
Then,
\begin{align*}
    u_0' &= U' \overline{U} + U \overline{U}'\\
    &= (V + iGU) \overline{U} + U(\overline{V} - iG\overline{U})\\
    &= V \overline{U} + U \overline{V}\\
    &= \frac{1}{2} r_0 r_1 \cos (\theta_1 - \theta_0).
\end{align*}
Therefore, we obtain%
\begin{eqnarray*}
(\frac{du_{0}}{dy})^{2} &=&4(u_{0}u_{1}-K^{2})\overset{\mbox{def}}{=}%
4f_{H,K}(u_{0},\varepsilon ), \\
u_{1} &=&(-\frac{\varepsilon }{8}+dK)u_{0}+\frac{3}{4}\sigma
^{2}cu_{0}^{2}+H,
\end{eqnarray*}%
and for $K\neq 0$%
\begin{equation*}
\theta _{1}-\theta _{0}=-\mbox{sgn}~(\frac{du_{0}}{dy})\tan ^{-1}\left( \frac{1}{K}%
f_{H,K}^{1/2}(u_{0},\varepsilon )\right) +\theta _{\ast },
\end{equation*}%
with $\theta _{\ast }$ arbitrary. Equilibria are given by%
\begin{equation*}
f_{H,K}(u_{0},\varepsilon )=0,\quad \frac{\partial }{\partial u_{0}}%
f_{H,K}(u_{0},\varepsilon )=0.
\end{equation*}%
This leads to the system%
\begin{eqnarray}
H &=&-2\widetilde{\varepsilon }u_{0}-\frac{9}{4}\sigma ^{2}cu_{0}^{2},
\label{equil region2} \\
K^{2} &=&-u_{0}^{2}(\widetilde{\varepsilon }+\frac{3}{2}\sigma ^{2}cu_{0}), 
\notag
\end{eqnarray}%
where%
\begin{equation*}
\widetilde{\varepsilon }=-\frac{\varepsilon }{8}+dK.
\end{equation*}%
For $K=0,$ we replace the formula above by%
\begin{eqnarray*}
\theta _{1} &=&\theta _{0},\text{   }\frac{d\theta _{0}}{dy}=-\frac{\varepsilon}{8\sqrt{2}}+\beta r_{0}^{2}, \\
r_{1}^{2} &=&(r_{0}^{\prime })^{2}=-\frac{\varepsilon }{8}r_{0}^{2}+\frac{3}{%
4}\sigma ^{2}cr_{0}^{4}+H.
\end{eqnarray*}%
We plot in Figure \ref{region2-1} the curve $\Gamma$ given by (\ref{equil region2}) defined in
parametric form, with $u_{0}=r_{0}^{2}\geq 0.$ We also plot the shape of 
$$f_{H,K} = \frac{3}{4} \sigma^2 c u_0^3 + \widetilde{\varepsilon}u_0^2 + H u_0 - K^2,$$ 
for various values of $(H,K).$
\begin{figure}[th]
\begin{center}
\includegraphics[width=10cm]{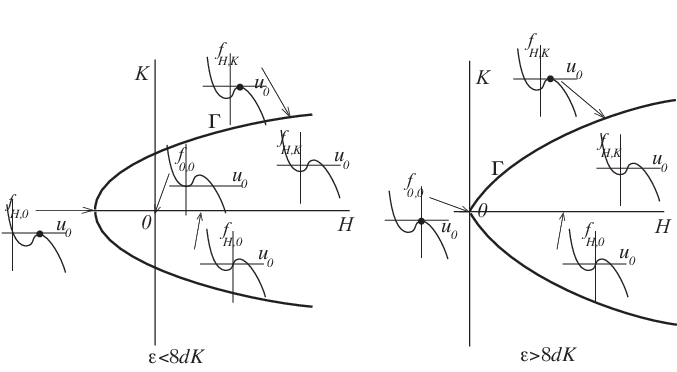}
\end{center}
\caption{Graphs of $f_{H,K}(.,\protect\varepsilon )$. Bounded orbits exist for $(H,K)$ in the 
region bounded by the curves $\Gamma$.}
\label{region2-1}
\end{figure}
The system has equilibria for $(H,K)\in \Gamma$. These correspond to equilibria in $(u_0,u'_0,\theta_1-\theta_0)$. For $\varepsilon<0$, $K=0$, and $H$ taking its minimum value $H_{min}$, the equilibrium corresponds to the TVF flow, which bifurcates subcritically.

In all cases, the region enclosed by $\Gamma$ contains periodic orbits in $u_0(y)$. In addition, when $\varepsilon<0$, $H=K=0$ is an isolated point on $\Gamma$ at which  $f_{0,0}$ has a double root at $u_0 =0$. Hence, the system has an orbit homoclinic to $0$, that is, to the Couette flow.

The projections of the bounded orbits onto the $(u_{0},u_{0}^{\prime })$ plane
for $K\neq 0,$ and onto the $(r_{0},r_{0}^{\prime })$ plane for $K=0,$ are
shown in Figure \ref{region2-2}. 

\begin{figure}[h]
\begin{center}
\includegraphics[width=10cm]{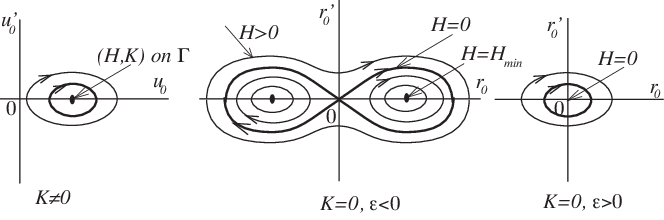}
\end{center}
\caption{Plot of the projections in the $(u_{0},u_{0}^{\prime })$ plane for $K\neq0$, and in the $(u_0,u'_0)$ plane for $K=0$, of
the bounded orbits, obtained by varying $H$ for a fixed $K$.}
\label{region2-2}
\end{figure}

For $K=0,$ $H=H_{min}$, and $\varepsilon<0$, the system has two equilibria $\pm r_0$ satisfying
\begin{equation*}
r_{0}^{2}=\frac{\varepsilon }{12\sigma ^{2}c}.
\end{equation*}
These equilibria correspond to solutions with principal part
\begin{equation*}
X=r_{0}e^{i\beta r_{0}^{2}y}\zeta _{0}+c.c.,
\end{equation*}%
and hence to wavy vortices with amplitude of order $\varepsilon^{1/2}$ in the subcritical region. These solutions are defined up to a translation in $y$, and they arise on the left-hand side of the curve $\Delta=0$ in the parameter plane $(\sigma,\tau)$. From the observer's point of view, they are periodic traveling waves that bifurcate subcritically.

In addition, for $K=0$ and $\varepsilon<0$, there are two orbits homoclinic to $0$ (the Couette flow), one obtained from the other by a shift of $\pi/\alpha_c$ in the axial direction $z$. For the normal form (\ref{normal form2}), these solutions correspond to a circle of homoclinics to the origin, due to the arbitrariness of the phase of $(U,V)$. In the physics literature, the case in which the nonlinear coefficient $c$ is negative is called ``the focusing case". It also appears in the context of ``bright solitary waves" in water-waves theory and optics. As for the persistence of these solutions, an important fact is that, among the circle of homoclinics, at least two of them persist for the full non-truncated system (see \cite{Haragus} p.223), and therefore for the Navier-Stokes system
(\ref{nl1},\ref{nl2},\ref{nl3}). These subcritical solutions have period $2\pi/\alpha_c$ in $z$ and exhibit a localized modulation in $y$. As a result, they remain periodic in $z$ while resembling the Couette flow for most values of $y$.

For $K\neq 0,$ there are equilibria corresponding to the curve $\Gamma$
for suitable values of $H.$ Then phase-space analysis is the same as that discussed in Section 5 of \cite{BGGIY}. It should be
noted that, since the coefficient $d$ in (\ref{normal form2}) is nonzero, such small bounded solutions exist on both sides of the curve $\Delta=0$ in the parameter plane $(\tau,\sigma)$.

As for the persistence of all the periodic solutions above, when one considers the full
non-truncated system and its validity for the Navier-Stokes system (\ref{nl1},\ref{nl2},\ref{nl3}), the discussion is analogous to that in \cite{BGGIY}: equilibria other than the TVF or the Couette flow correspond to bifurcating time-periodic solutions of the Navier-Stokes equations, namely the classical wavy vortices.
The more exotic solutions, for which $r_0(y)$ and $r_1(y)$ are periodic, have the property that the corresponding phases $\theta_0(y)$ and $\theta_1(y)$ are each given by the sum of a linear term and a periodic term with the same period in $y$. As shown in \cite {BGGIY}, these quasi-periodic solutions persist for the full Navier-Stokes system.


\section{Appendix}



\subsection{Computation of coefficient $c$} \label{calcul c}


We now turn our attention to the $y$-independent velocity field $U$. 
After dropping the hats for notational convenience, 
the governing equations \eqref{nl1}, \eqref{nl2}, and \eqref{nl3} lead to
\begin{eqnarray*}
\frac{\partial U}{\partial t} &=&\Delta _{\bot }U-\nabla _{\bot }p+\left( 
\begin{array}{c}
Tg(x)u_{y} \\ 
u_{x} \\ 
0%
\end{array}%
\right) -(U_{\bot }\cdot \nabla _{\bot })U+\frac{T}{2}(1-\mu)\left( 
\begin{array}{c}
u_{y}^2 \\ 
0 \\ 
0%
\end{array}%
\right), \\
\end{eqnarray*}%
$$
\nabla _{\bot }\cdot U_{\bot } =0.
$$
Here $U=(U_{\bot},u_y)$, and the subscript $\bot$ denotes the components in the $(x,z)$-plane.
The field $U$ is $2\pi
/\alpha _{c}$ periodic in $z,$ and satisfies the $0$ boundary conditions at $%
x=\pm 1/2.$ Let us define the operators $\mathbf{L}_0$, $\mathbf{L}_1$, and the quadratic form $\mathbf{B}$ by
\[
\mathbf{L}_{0}U:=\Delta _{\bot }U-\nabla _{\bot }p_{0}+\left( 
\begin{array}{c}
T_{c}g(x)u_{y} \\ 
u_{x} \\ 
0%
\end{array}%
\right) ,
\]%
where $p_{0}$ is such that 
\[
\nabla _{\bot }\cdot (\mathbf{L}_{0}U)_{\bot }=0,
\]
\[
(\mathbf{L}_{0}U)_{x}=0 \quad \hbox{when} \quad x=\pm 1/2;
\]%
\[
\mathbf{L}_{1}U:=-\nabla _{\bot }p_{1}+\left( 
\begin{array}{c}
g(x)u_{y} \\ 
0 \\ 
0%
\end{array}%
\right) ,
\]%
with $p_{1}$ such that%
\[
\nabla _{\bot }\cdot (\mathbf{L}_{1}U)_{\bot }=0,
\]
\[
(\mathbf{L}_{1}U)_{x}=0 \quad \hbox{when} \quad x=\pm 1/2,
\]
and
\begin{eqnarray*}
\mathbf{B}(U,V) &:=&-\frac{1}{2} \Bigl[ (U_{\bot }\cdot \nabla _{\bot })V+(V_{\bot
}\cdot \nabla _{\bot })U \Bigr]+\frac{T}{2}(1-\mu)\left( 
\begin{array}{c}
u_{y}v_{y} \\ 
0 \\ 
0%
\end{array}%
\right)+\nabla _{\bot }q, 
\end{eqnarray*}%
\[
\nabla _{\bot }\cdot (\mathbf{B}(U,V))_{\bot } =0,
\]
\[ 
(\mathbf{B}(U,V))_{x}=0 \quad \hbox{when} \quad x=\pm 1/2.
\]
The system now takes the form
\begin{equation}
\frac{\partial U}{\partial t}=\mathbf{L}_{0}U+\tau \mathbf{L}_{1}U+\mathbf{B}_c%
(U,U)+\tau\mathbf{B}_1(U,U),  \label{NSequ}
\end{equation}
where $\mathbf{B}_c=\mathbf{B}|_{T=T_c}$. %
The equation is posed in a space of divergence-free vector fields satisfying the zero boundary conditions, with
\[
\tau =T-T_{c}.
\]%
To obtain the Landau equation that describes the first bifurcation occurring
for $\tau $ close to $0$, we use the fact that $\mathbf{L}_{0}$ has a double 
zero eigenvalues with corresponding eigenvectors%
\[
\zeta =e^{i\alpha _{c}z}\widehat{U}(x),\text{   }\overline{\zeta }=e^{-i\alpha _{c}z}%
\overline{\widehat{U}}(x),
\]%
and the fact that the rest of the spectrum of  operator $\mathbf{L}_{0}+\tau \mathbf{L}_{1}$ is only composed of 
isolated eigenvalues of negative real parts, not close to 0. Moreover, the center manifold reduction
applies (see \cite{Haragus}) with a symmetry $O(2)$, and the
dynamics near $0$ reduces to the study of a 2-dimensional differential equation in $
\mathbb{C}$:
\begin{equation}
\frac{dA}{dt}=a\tau A-cA|A|^{2},  \label{Landauequ}
\end{equation}
where 
\begin{eqnarray}
U &=&A(t)\zeta +\overline{A}(t)\overline{\zeta }+\Phi (A,\overline{A},\tau ),
\label{centermanif} \\
\Phi (A,\overline{A},\tau ) &=&\tau A\Phi _{10}^{(1)}+A^{2}\Phi
_{20}+|A|^{2}\Phi _{11}+\overline{A}^{2}\overline{\Phi _{20}}+...  \nonumber
\end{eqnarray}%
The coefficients $\Phi _{ij}$ are divergence free vector functions of $x$
which satisfy the boundary conditions. The coefficients $a$, $c$ and $\Phi
_{ij}$ may be computed as indicated below. Replacing $U$ by (\ref%
{centermanif}) in (\ref{NSequ}) and using (\ref{Landauequ}), gives after
identification of each monomial $\tau ^{p}A^{n}\overline{A}^{m}$%
\begin{eqnarray}
a\zeta  &=&\mathbf{L}_{0}\Phi _{10}^{(1)}+\mathbf{L}_{1}\zeta 
\label{coef a} \\
0 &=&\mathbf{L}_{0}\Phi _{20}+\mathbf{B}_c(\zeta ,\zeta ),  \nonumber \\
0 &=&\mathbf{L}_{0}\Phi _{11}+2\mathbf{B}_c(\zeta ,\overline{\zeta }), 
\nonumber \\
-c\zeta  &=&\mathbf{L}_{0}\Phi _{21}+2\mathbf{B}_c(\zeta ,\Phi _{11})+2\mathbf{B%
}_c(\overline{\zeta },\Phi _{20}).  \label{coef c}
\end{eqnarray}%
The scalar product is defined as
\[
\langle U,V\rangle =\int_{0}^{2\pi /\alpha_c }\int_{-1/2}^{1/2}(u_{x}\overline{%
v_{x}}+u_{y}\overline{v_{y}}+u_{z}\overline{v_{z}})\, dxdz,
\]%
and we define the adjoint operator $\mathbf{L}_{0}^{\ast }$ by%
\[
\mathbf{L}_{0}^{\ast }V=\Delta _{\bot }V-\nabla _{\bot }q_{0}+\left( 
\begin{array}{c}
v_{y} \\ 
T_{c}g(x)v_{x} \\ 
0%
\end{array}%
\right) .
\]%
Since we have%
\[
\mathbf{L}_{0}\zeta =0,
\]%
we obtain%
\[
a\langle \zeta ,\zeta ^{\ast }\rangle =\langle \mathbf{L}_{1}\zeta ,\zeta
^{\ast }\rangle ,
\]%
\[
-c\langle \zeta ,\zeta ^{\ast }\rangle =\langle 2\mathbf{B}_c(\zeta ,\Phi
_{11})+2\mathbf{B}_c(\overline{\zeta },\Phi _{20}),\zeta ^{\ast }\rangle ,
\]%
where $\zeta ^{\ast }$ is the eigenvector of the form $e^{i\alpha _{c}z}V(x)$
for the $0$ eigenvalue of the adjoint operator $\mathbf{L}_{0}^{\ast }.$ The
eigenvectors $\zeta $ and $\zeta ^{\ast }$ satisfy 
\begin{eqnarray*}
\zeta  &=&e^{i\alpha _{c}z}(u_{x}^{0},u_{y}^{0},u_{z}^{0})^{t}, \\
\zeta ^{\ast } &=&e^{i\alpha _{c}z}(v_{x}^{0},v_{y}^{0},v_{z}^{0})^{t},
\end{eqnarray*}%
with%
\begin{eqnarray}
(D^{2}-\alpha _{c}^{2})u_{x}^{0}-Dp_{0}+T_{c} g(x) u_{y}^{0} &=&0, \label{sys1} \\
(D^{2}-\alpha _{c}^{2})u_{y}^{0}+u_{x}^{0} &=&0, \label{sys2} \\
(D^{2}-\alpha _{c}^{2})u_{z}^{0}-i\alpha _{c}p_{0} &=&0, \label{sys3} \\
Du_{x}^{0}+i\alpha _{c}u_{z}^{0} &=&0, \label{sys4}
\end{eqnarray}%
and the adjoint system
\begin{eqnarray}
(D^{2}-\alpha _{c}^{2})v_{x}^{0}-Dq_{0}+v_{y}^{0} &=&0, \label{sys11} \\
(D^{2}-\alpha _{c}^{2})v_{y}^{0}+ T_{c} g(x) v_{x}^{0} &=&0, \label{sys12} \\
(D^{2}-\alpha _{c}^{2})v_{z}^{0}-i\alpha _{c}q_{0} &=&0, \label{sys13} \\
Dv_{x}^{0}+i\alpha _{c}v_{z}^{0} &=&0, \label{sys14}
\end{eqnarray}%
with the usual boundary conditions at $x=\pm 1/2.$ We observe that $%
u_{x}^{0},u_{y}^{0},v_{x}^{0},v_{y}^{0}$ are real functions, while $%
u_{z}^{0},v_{z}^{0}$ are pure imaginary.

For the calculation of $c$, we need to compute $\Phi _{11}$ and $\Phi _{20}.$
Let us denote%
\[
\Phi _{11}=(u_{x}^{11},u_{y}^{11},u_{z}^{11})^{t},
\]%
and%
\[
(\zeta _{\bot }\cdot \nabla _{\bot })\overline{\zeta }+(\overline{\zeta }%
_{\bot }\cdot \nabla _{\bot })\zeta =\left( 
\begin{array}{c}
2D(u_{x}^{0})^{2} \\ 
2D(u_{x}^{0}u_{y}^{0}) \\ 
0%
\end{array}%
\right) ,
\]%
then we need to solve%
\begin{eqnarray*}
D^{2}u_{x}^{11}-Dp + T_{c} g(x) u_{y}^{11} &=&2D(u_{x}^{0})^{2}-(1-\mu)T_c(u_y^0)^2 \\
D^{2}u_{y}^{11}+u_{x}^{11} &=&2D(u_{x}^{0}u_{y}^{0}) \\
D^{2}u_{z}^{11} &=&0, \\
Du_{x}^{11} &=&0
\end{eqnarray*}%
with the usual boundary conditions. This implies%
\[
\Phi _{11}=(0,u_{y}^{11},0)^{t},
\]%
with%
\[
D^{2}u_{y}^{11}=2D(u_{x}^{0}u_{y}^{0}), \qquad  u_{y}^{11}(\pm 1/2)=0.
\]%
Hence $u_{y}^{11}$ is real and is given by%
\begin{equation}
u_{y}^{11}(x)=2\int_{-1/2}^{x}u_{x}^{0}(s)u_{y}^{0}(s)ds-2(x+1/2)%
\int_{-1/2}^{1/2}u_{x}^{0}(s)u_{y}^{0}(s)ds.  \label{Phi11}
\end{equation}%
Let us now denote%
\[
\Phi _{20}=e^{2i\alpha_c z}(u_{x}^{20},u_{y}^{20},u_{z}^{20})^{t}
\]%
and compute%
\[
(\zeta _{\bot }\cdot \nabla _{\bot })\zeta =e^{2i\alpha_c z}\left( 
\begin{array}{c}
0 \\ 
u_{x}^{0}Du_{y}^{0}-u_{y}^{0}Du_{x}^{0} \\ 
\frac{i}{\alpha_c }u_{x}^{0}D^{2}u_{x}^{0}-\frac{i}{\alpha_c }(Du_{x}^{0})^{2}%
\end{array}%
\right).
\]%
We need to solve%
\begin{eqnarray*}
(D^{2}-4\alpha_c ^{2})u_{x}^{20}-Dp + T_{c} g(x) u_{y}^{20} &=&-\frac{T_c}{2}(1-\mu)(u_y^0)^2, \\
(D^{2}-4\alpha_c ^{2})u_{y}^{20}+u_{x}^{20}
&=&u_{x}^{0}Du_{y}^{0}-u_{y}^{0}Du_{x}^{0}, \\
(D^{2}-4\alpha_c ^{2})u_{z}^{20}-2i\alpha_c p &=&\frac{i}{\alpha_c }%
[u_{x}^{0}D^{2}u_{x}^{0}-(Du_{x}^{0})^{2}], \\
Du_{x}^{20}+2i\alpha_c u_{z}^{20} &=&0,
\end{eqnarray*}%
\[
u_{x}^{20}=Du_{x}^{20}=u_{y}^{20}=0,x=\pm 1/2.
\]%
This leads to a sixth-order system%
\begin{eqnarray*}
(D^{2}-4\alpha_c ^{2})^{2}u_{x}^{20}-4\alpha_c ^{2}T_{c} g(x) u_{y}^{20}
&=&2D[u_{x}^{0}D^{2}u_{x}^{0}-(Du_{x}^{0})^{2}]+2\alpha_c^2T_c(1-\mu)(u_y^0)^2, \\
(D^{2}-4\alpha_c ^{2})u_{y}^{20}+u_{x}^{20}
&=&u_{x}^{0}Du_{y}^{0}-u_{y}^{0}Du_{x}^{0}, \\
u_{x}^{20} &=&Du_{x}^{20}=u_{y}^{20}=0,x=\pm 1/2.
\end{eqnarray*}%
This yields that $u_{x}^{20}$ and $u_{y}^{20}$ are real, while $u_{z}^{20}$ is purely imaginary. Now we can compute%
\[
2\mathbf{B}_c(\zeta ,\Phi _{11})=
\Pi e^{i\alpha_c z}\left( 
\begin{array}{c}
T_c (1-\mu) u_{y}^0 u_{y}^{11} \\ 
-u_{x}^{0}Du_{y}^{11} \\ 
0%
\end{array}%
\right) 
.
\]
We also obtain%
\begin{align*}
2\mathbf{B}_c(\overline{\zeta },\Phi _{20})&=-\Pi e^{i\alpha_c z}\left( 
\begin{array}{c}
D(u_{x}^{0}u_{x}^{20})+2u_{x}^{20}Du_{x}^{0}+\frac{1}{2}u_{x}^{0}Du_{x}^{20}-T_c (1-\mu) u_{y}^0 u_{y}^{20}
\\ 
u_{x}^{0}Du_{y}^{20}+2u_{y}^{20}Du_{x}^{0}+u_{x}^{20}Du_{y}^{0}+\frac{1}{2}%
u_{y}^{0}Du_{x}^{20} \\ 
\frac{i}{2\alpha_c }[D(u_{x}^{0}Du_{x}^{20})-2u_{x}^{20}D^{2}u_{x}^{0}]%
\end{array}%
\right).
\end{align*}
Hence, from (\ref{coef c}), as the projection $\Pi$ disappears in the scalar product,
we obtain $c$ through
\begin{eqnarray*}
&&c\int_{-1/2}^{1/2}\Bigl[ u_{x}^{0}v_{x}^{0}+u_{y}^{0}v_{y}^{0}+\frac{1}{\alpha
_{c}^{2}} Du_{x}^{0} Dv_{x}^{0} \Bigr] \, dx 
\\
&=&%
\int_{-1/2}^{1/2}v_{y}^{0} \Bigl[ u_{x}^{0}Du_{y}^{11}+u_{x}^{0}Du_{y}^{20}+2u_{y}^{20}Du_{x}^{0}+u_{x}^{20}Du_{y}^{0}+%
\frac{1}{2}u_{y}^{0}Du_{x}^{20} \Bigr] \, dx 
\\
&&+\int_{-1/2}^{1/2}v_{x}^{0} \Bigl[ D(u_{x}^{0}u_{x}^{20})+2u_{x}^{20}Du_{x}^{0}+%
\frac{1}{2}u_{x}^{0}Du_{x}^{20} - T_c(1-\mu)u_y^0 (u_y^{11}+u_y^{20}) \Bigr] dx 
\\
&&+\int_{-1/2}^{1/2}\frac{1}{2\alpha_{c}^{2}}%
Dv_{x}^{0} \Bigl[ D(u_{x}^{0}Du_{x}^{20})-2u_{x}^{20}D^{2}u_{x}^{0} \Bigr] \, dx.
\end{eqnarray*}%


\subsection{Numerical computation of $c$} \label{App:calcul c}


Let us now detail the adjoint system. We apply $\alpha_c^2 - D^2$ to (\ref{sys14}) and use (\ref{sys13}), which leads to
$$
(\alpha_c^2 - D^2) D v_x^0 + \alpha_c^2 q_0 = 0.
$$
The adjoint system when $\lambda = 0$ is thus
\begin{eqnarray*}
(D^2 - \alpha_c^{2})^2 v_{x} &=& \alpha_c ^{2} v_y \\
(D^{2} - \alpha_c^2)v_y &=& -  T_c g(x) v_{x},
\end{eqnarray*}
namely
\begin{equation} \label{ad1}
D^4 v_x = - \alpha_c^4 v_x + 2 \alpha_c^2 D^2 v_x + \alpha_c^2  v_y,
\end{equation}
\begin{equation} \label{ad2}
D^2 v_y = \alpha_c^2 v_y - T_c g(x) v_x .
\end{equation}
For $\mu> \mu_c \approx -0.8$, the coefficient $\sigma=b_4$ is still positive, 
while the coefficient $c$ has changed its sign, passing from positive to negative at $\hat \mu_c \approx-0.65$; see Figure \ref{signofc}. 
Consequently, the study made in section \ref{G-L equ1} is relevant for $\mu_c<\mu< \hat \mu_c$. For $\mu$ close to $\mu_c$, the relevant study is in section \ref{G-L equ2}. 

We note that this configuration of $c$ and $b_4$ is consistent with the computations for the full Couette-Taylor system with $\eta=0.95$ in the book \cite{Iooss}: see p.41 Fig III.2  for the sign of $c$, and p.61 Fig IV.2 for the sign of $b_4$. In the latter case, the absence of a sign change corresponds to the first unstable mode $m=0$, and the parameter $\mathfrak{R}_0=\frac{\omega_0r_0d}{\nu}$ used in \cite{Iooss} is related to our parameter $T$ by $T=\frac{2(1-\mu)(1-\eta)}{\mu^2}\mathfrak{R}_{0}^2$.
\begin{figure}[th]
\begin{center}
\includegraphics[width=10cm]{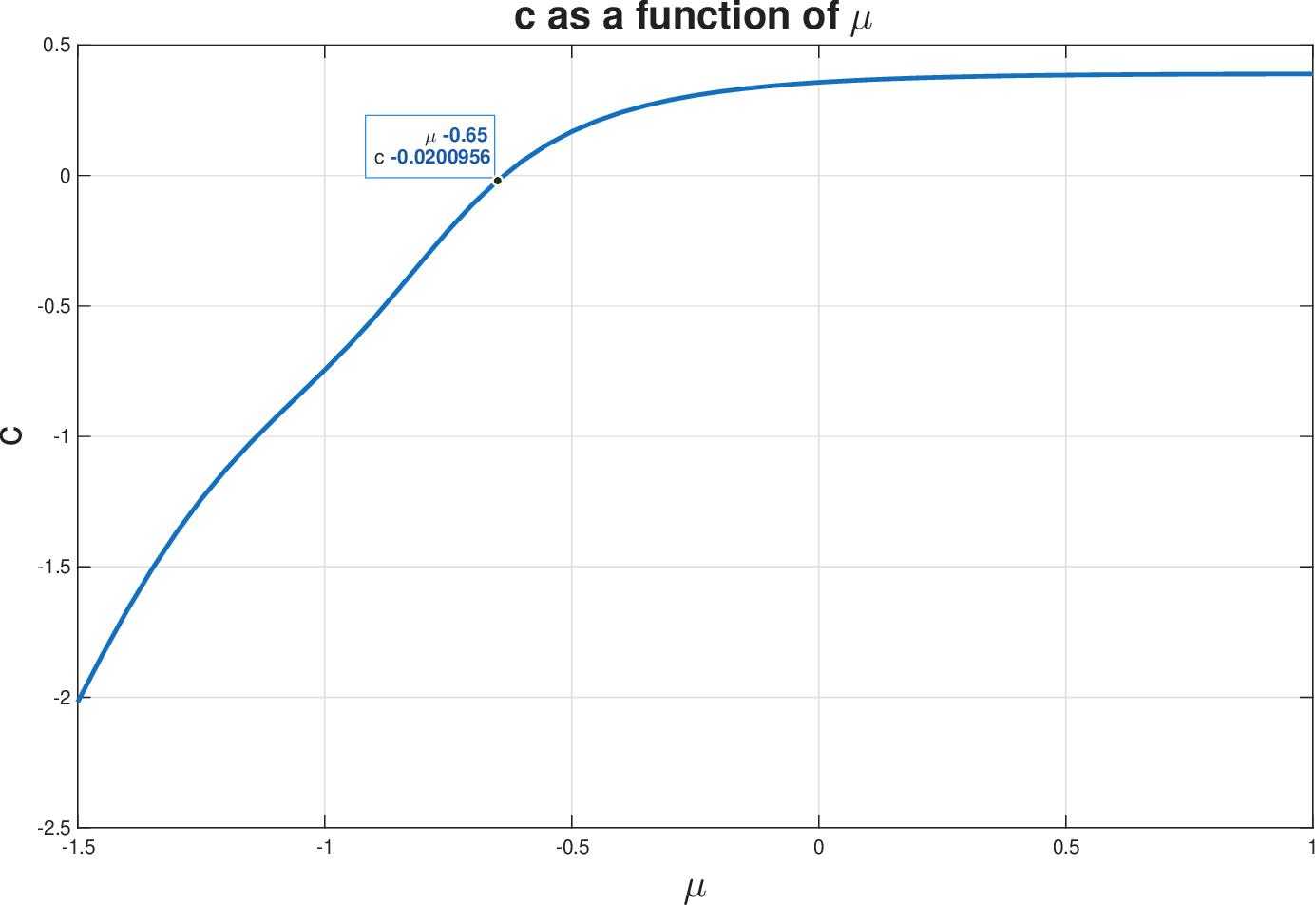}
\end{center}
\caption{Sign of $c$}
\label{signofc}
\end{figure}


\subsubsection*{Acknowledgements}


D. Bian is supported by NSFC under the contract 12271032. Z. Yang is partially supported by the NSF grant DMS-2550221.


\end{document}